\newcommand{\mm}{\text{\textltailm}}
\newcommand{\ks}{\text{\texthtk}}
\newcommand{\ki}{\kappa}
\newcommand{\abs}[1]{\lvert #1 \rvert}
\newcommand{\norm}[1]{\lVert #1 \rVert}
\newcommand{\V}[1]{\boldsymbol{#1}}
\newcommand{\grad}{\mathop{\nabla}}
\newcommand{\laplace}{\mathop{\Delta}}
\renewcommand{\div}{\mathop{\mathrm{div}}}
\DeclareMathOperator*{\esssup}{ess\,sup}
\DeclareMathOperator*{\supp}{supp}
\DeclareMathOperator*{\diam}{diam}
\newcommand{\eg}{e.g.}
\newtheorem{thm}{Theorem}
\newtheorem{lem}[thm]{Lemma}
\newtheorem{cor}[thm]{Corollary}
\theoremstyle{definition}
\newtheorem{rmk}{Remark}
\newtheorem{dfn}{Definition}
\title{On periodic motions of a harmonic oscillator interacting with
  incompressible fluids}
\author{Giusy Mazzone and Mahdi Mohebbi}
\date{\small{\em Department of Mathematics and Statistics 
\\
Queen’s University
\\
Kingston, ON K7L 3N6}}
\begin{document}

\maketitle
\begin{abstract}
  We consider a mass-spring system immersed in an incompressible fluid
  flow governed by the Navier-Stokes equations subject to a prescribed
  time-periodic flow rate (and possibly external time-periodic body forces on
  the fluid and the mass). We show that, with no restriction on the period of
  the flow rate (and of the external forces), when the flow rate is ``small'',
  there exits a weak time-periodic solution to the coupled system. Under some
  more regularity and ``smallness'' conditions on the flow rate (and the
  external forces) we also show that these solutions are, indeed, strong
  solutions.
  \medskip\\
  \emph{Keywords:} Navier-Stokes equations, Undamped mass-spring-fluid
  interaction, Periodic solutions, Resonance\\
  \emph{MSC (2020):} 35Q30, 76D05, 35D30, 35D35, 35B10, 74F10, 35B34
\end{abstract}

\section{Introduction}

We consider the interaction between a harmonic oscillator (consisting of a mass
and a spring) and a fluid occupying an infinite channel. The fluid inside
the channel is driven by a prescribed periodic flow rate (with period $T$). We
investigate whether the coupled fluid-oscillator system admits a periodic
motion (with the same period $T$). We are not imposing any restriction on the
period of the flow rate and, in particular, this period can be the natural
frequency of the oscillator (that is the frequency at which the mass-spring
system will oscillate due to initial perturbations and in the absence of
external forces). Physical intuition suggests that, under a prescribed
time-periodic flow rate, the fluid would exert on the oscillator a time-periodic
force having frequency matching the natural frequency of the oscillator. In
this scenario, the phenomenon of resonance would occur (since the oscillator is
undamped), and the generic motion of the oscillator would be characterized by
oscillations with increasing amplitude.  In mathematical terms, this means that
no periodic motion would exist. In this paper, we show that this intuition is
not correct and, in fact, the fluid dissipation provides sufficient damping to
guarantee the existence of such periodic motions for the fluid-oscillator
system, no matter what the period of the flow rate is. From a physical point of
view such a system can be an abstraction of many engineering structures, energy harvesting devices and
\emph{in situ} medical devices \cite{MR4419355, bloodvalve1, bloodvalve2, wave1}.

The equations governing the motion of the fluid-oscillator system are given by
the coupling of the Navier-Stokes equations for the fluid and the balance of
linear momentum for the mass-spring system, \eqref{eq:noninertial}. In Theorem
\ref{thm:weak} we show that, for an arbitrary period $T$, under quite general
external $T$-periodic body forces on the mass and the fluid and when the
prescribed $T$-periodic flow rate is ``small'' there is at least one weak
solution to the coupled system. These solutions tend to the generalized
$T$-periodic Poiseuille flow \cite{MR2196495} at channel inlets/outlets
(Remark \ref{rmk:assymptotics}). In Theorem \ref{thm:strong}, when the flow
rate and forces are more regular and under some additional ``smallness''
conditions, we show that there are strong solutions satisfying the equations
almost everywhere.

From a mathematical point of view, the main difficulty in finding time-periodic
solutions to this system is that the governing equations are only ``partially''
dissipative, that is, the ``natural'' energy inequality of the system (obtained
from the balance of kinetic energy, see \eqref{eq:E}) lacks a dissipative term
corresponding to the potential energy of the spring, $\ks \abs{z}^2$. As such,
standard well-known techniques to show the existence of periodic solutions for
nonlinear PDEs (\eg~\cite{MR0120968}) cannot be applied. These techniques, as
an important part of their argument, consider the initial value problem and
show that it is possible to choose the initial values in a bounded set, $A$,
such that the Poincar\'e map, taking these initial values to the corresponding
solutions at time $T$, is compact with the target set $A$. The Poincar\'e map
then admits fixed points which, in turn, yield to periodic solutions to the
system. In the absence of a ``complete'' energy inequality, where the
dissipation term is proportional to the energy itself, constructing such a set
$A$, is not possible. Nevertheless, we show that it is possible to complete the
dissipation term by considering a ``particular'' energy inequality,
\eqref{eq:G}; Yet, our proof takes an unconventional path, in that we use the
Leray-Schauder fixed point argument in finite dimensional unbounded sets.

More precisely, our strategy consists of using the Galerkin method along with
suitable energy estimates to show the existence of weak periodic solutions but
not through the fixed points of the Poincar\'e map discussed above. Instead,
the basic idea is to consider the linearized problem (at each Galerkin level),
where essentially the nonlinear term in the Navier-Stokes equation, $\V v \cdot
\V\grad v$ in \eqref{eq:noninertial0phi}, is replaced with $\V{\tilde{v}} \cdot
\V\grad v$, for some given function $\V{\tilde{v}}$. The existence of periodic
solution to this linearized problem follows easily from theorems available in
the context of ordinary differential equations. Next, we consider the map
$\Phi$ (see \eqref{eq:Phi}), that maps any $T$-periodic $\V{\tilde{v}}$ to the
$T$-solution of the linearized problem. The existence of periodic solutions to
the original nonlinear problem is then established once we show that $\Phi$ has
a fixed point. The ``particular'' energy inequality, \eqref{eq:G}, can be used
to show that the set of fixed points (and their straight line homotopy) is
bounded, which (along some other properties for $\Phi$) guarantees the
existence of a fixed point by Leray-Schauder principle. This procedure requires
obtaining, explicitly, some specific energy estimates (see \eg
\eqref{eq:energy}), that are not needed if a Poincar\'e map argument is used
and play a fundamental role in showing the higher regularity of the solutions.
Both the derivation of the ``particular'' energy inequality and the
unconventional proof, just outlined, through a Leray-Schauder fixed point
argument follow ideas previously developed for other problems concerning the
existence of periodic solutions to partially dissipative systems in
magnetoelasticity \cite{MR3092957}.

For the linear case when the fluid is governed by the Stokes equations, a
related problem have been considered in \cite{MR4559728}. However, one shall
note that when the problem is linear and the solutions to the initial value
problem are unique, the existence of periodic solutions and its relation to
the occurrence of resonance can be addressed satisfactorily (see
\eg~\cite{MR3329019}). However, in nonlinear problems (and specifically for the
problem considered here) the existence of periodic solutions (even strong
solutions) is not known to be a sufficient (and a necessary) condition for the
phenomenon of resonance not to occur. This is because the system under a
particular external force may have a periodic solution which can be viewed as a
solution to the corresponding initial boundary value problem with a specific
initial condition and, at the same time, there are other initial conditions
for which the initial value problem will have unbounded solutions (say, in the
energy norm). With this consideration, to remove the possibility of the
occurrence of resonance, one needs a type of energy inequality for all the
solutions in a certain regularity class corresponding to periodic external
forces (of a given regularity class); And this, will just show that
resonance will not occur in these assumed regularity classes.

A similar problem to what is considered here, has been investigated in
\cite{bonheureGaldi}, where a system of mass-spring is considered in
interaction with an incompressible fluid filling the whole domain
$\mathbb{R}^3$. The fluid is subject to a prescribed uniform time-periodic
velocity at infinity. It is shown that, also for this case, weak time-periodic
solutions exist with no restriction on the period. However, the results are
obtained in the absence of direct external forces on the mass and/or the
fluid. Our method to show the existence of strong solutions may be applied also
to the whole domain case to obtain strong solutions under some restrictions on
the prescribed uniform velocity at infinity.

\section{Formulation}

Consider an incompressible Newtonian fluid in an infinite channel interacting
with a harmonic oscillator, as shown in Figure.~\ref{fig:infchan}. The harmonic
oscillator is composed of a spring with stiffness constant, $\ks$,
attached to a rigid body of mass, $\mm$. The force exerted by the spring on the
rigid body (referred to as the ``mass'' in what follows) is modeled by Hooke's
law. Without loss of generality, we assume the mass is constrained to move
horizontally, and ignore the effects of gravitational forces (see Remark
\ref{rmk:forces}). Consider an inertial Cartesian coordinate system $\{O', \V
e'_i\}$, $i=1, 2$ and $3$, with the origin $O'$ coinciding with the end of the spring at its equilibrium. Further, assume
the channel, $\mathcal{C}$, is a straight channel along $\V e'_1$ with constant
cross-section, $\Pi \subset \mathbb{R}^2$; precisely, $\mathcal{C} = \Pi \times
\mathbb{R}$. Let $\mathcal{B}(t) \in \mathbb{R}^3$ denote the region occupied
by the rigid body at time $t$ and let $\Gamma(t) = \partial\mathcal{B}$. Then
the volume occupied by the fluid at time $t$ is $\Omega(t) = \mathcal{C}
\setminus \overline{\mathcal{B}(t)}$. Denote by $y_i$, the $i$-th coordinate of
a point $\V y \in \Omega(t)$ and by $\V u(\V y, t)$ and $z(t)$ the velocity of
the fluid and the displacement of the mass (from spring's equilibrium),
respectively. Assume that the fluid is subject to move under a prescribed
(time-)periodic flow rate, $\phi(t)$, with period $T>0$. The governing
equations for the coupled system of the fluid and the harmonic oscillator are
given by
\begin{equation} \label{eq:master}
  \begin{aligned}
  &\left. \begin{aligned}
    &\frac{\partial \V u}{\partial t} + \V u \cdot \grad\V u
    = \frac1\rho \div\V T(\V u, p), \\
    &\div \V u = 0,
  \end{aligned} \right\} \quad \text{in } \Omega(t) \times \mathbb{R}, \\
  & \int_{\mathcal{S}} \V u(t) \cdot \V n_{s} \;dS = \phi(t), \qquad \forall
    t \in \mathbb{R}, \\
  &\;\;\mm\frac{d^2z}{dt^2} + \ks z = \int_{\Gamma(t)} \V e'_1 \cdot \V T(\V u,
    p) \cdot \V n \;dS.
  \end{aligned}
\end{equation}
In the above equations, $\rho$ is the constant density of the fluid and $\V
n=\V n(\V y, t)$ denotes the unit outward normal vector, to the boundary
$\Gamma(t)$ of the body. $\V T$ indicates the Cauchy stress tensor for an
incompressible Newtonian fluid:
\begin{equation*}
  \V T(\V u, p) = -p\V I + 2\mu \V D(\V u), \qquad \V D(\V u) = \frac12
  (\grad\V u + (\grad\V u)^T),
\end{equation*}
where $\mu$ is the (constant) dynamic viscosity coefficient of the fluid and
$p=p(\V y,t)$ is the pressure field. $\mathcal{S} \subset \overline{\Omega'}$,
for some bounded $\Omega' \subset \Omega(t)$, is any orientable surface with
normal $\V n_s$ such that $\partial\mathcal{S} \subset \Sigma$, where $\Sigma =
\partial\Pi \times \mathbb{R}$ denotes the boundary of the channel and is
independent of time.
\begin{figure}[b] 
  \small
  \psfrag{!}{$\V e_1$} \psfrag{"}{$O$} \psfrag{#}[bc][bc]{$\V e'_1$}
  \psfrag{\$}{$O'$} \psfrag{\%}{$\ks$} \psfrag{&}{$\mm$}
  \psfrag{'}{$\Gamma(t)$} \psfrag{\(}{$\Sigma$} \psfrag{\)}{} 
  \psfrag{*}{$\V n$} \psfrag{+}{$\Omega(t)$}
  \centering
  \includegraphics[width=0.8\textwidth]{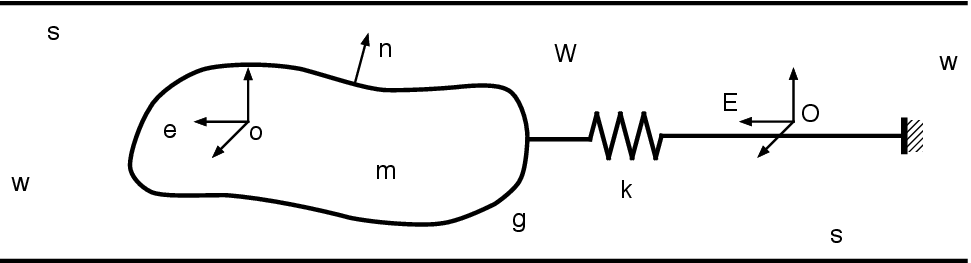}
  \caption{Infinite channel configuration.}
  \label{fig:infchan}
\end{figure}
Assuming no-slip conditions on the fluid boundaries, we are
concerned with the existence of $T$-periodic solutions to \eqref{eq:master},
for any period $T$ and ``small'' flow rates $\phi(t)$ (see
\eqref{eq:smallnessWeak}). We append the following boundary and
periodicity conditions for all $t \in \mathbb{R}$:
\begin{equation} \label{eq:bc}
\begin{aligned}
  &\V u(t) = \frac{dz}{dt}\V e'_1, \qquad\text{on } \Gamma(t), \\
  &\V u(t) = 0,  \qquad\quad\text{on } \Sigma, \\
  &\Omega(t+T) = \Omega(t), \qquad \V u(\V y, t+T) = \V u(\V y, t) \quad
  \text{and} \quad z(t+T) = z(t).
\end{aligned}
\end{equation}
To remove the inconvenience of the unknown time dependent domains in the above
formulation, we consider a new frame, $\mathcal{N}$, with Cartesian coordinate
system $\{O, \V e_i\}$, attached to the mass $\mm$. Assume, without loss of
generality, that $O$ is at an interior point $\mathcal{B}(t)$ and that
$\mathcal{N}$ is oriented in such a way that $\V e_i$ is parallel to $\V e'_i$
for all $i$ and let $\V x$ denote the position vector of a point in the new
non-inertial frame. It can be shown that the change of variables $\V y
\rightarrow \V x$ defined by
\begin{equation*}
  \V y = \V x + z(t)\V e_1 = \V x + z(t) \V e'_1,
\end{equation*}
transforms \eqref{eq:master} and \eqref{eq:bc} into the following boundary
value problem (where all the functions involving the space variables are
understood to be functions of the $\V x$ variable):
\begin{equation} \label{eq:noninertial}
  \begin{aligned}
  &\left. \begin{aligned}
    &\frac{\partial \V u}{\partial t} + (\V u - \frac{dz}{dt}\V e_1) \cdot
      \grad\V u = \frac1\rho \div\V T(\V u, p), \\
    &\div \V u = 0,
  \end{aligned} \right\} \quad \text{in } \Omega \times \mathbb{R}, \\
    &\;\;\mm\frac{d^2z}{dt^2} + \ks z = \int_\Gamma \V e_1 \cdot \V T(\V u, p)
    \cdot \V n \;dS, \\
    & \int_{\mathcal{S}} \V u(t) \cdot \V n_{s} \;dS= \phi(t), \qquad \forall
    t \in \mathbb{R}, \\
    & \V u = \frac{dz}{dt} \V e_1, \quad \text{ on } \Gamma, \\
    & \V u = 0, \quad \text{ on } \Sigma, \\
    & \V u(t+T) = \V u(t), \quad z(t+T) = z(t).
  \end{aligned}
\end{equation}
In the above, $\Omega$ and $\Gamma$ denote the time-independent domain of the
fluid and the boundary of the rigid body, respectively, referenced in
$\mathcal{N}$ (and we also have that the region occupied by the rigid body,
$\mathcal{B}$, is time-independent in the frame $\mathcal{N}$). Concerning the
regularity of the boundary, we assume that $\Omega$ is a Lipschitz domain when
we are concerned with the weak solutions to the above system and that $\Omega$
is a domain of class $C^2$ when considering the strong solutions to
\eqref{eq:noninertial}.
\begin{rmk}
If the channel does not have a constant cross-section, the above transformation
(or any other) will not render the domain time-independent. This may be handled
by a more involved mathematical analysis, however, with no significant gained
advantage from a physical point of view. As long as the methods presented here
are concerned, a crucial estimate depends on the property \ref{itm:VFar}
below, of the ``flux carrier'' and its particular form \eqref{eq:chiForm},
which hold only if the cross-section of the ``exits'' are constant (although,
not necessarily the same) whenever $\abs{\V x} > C$, for some $C>0$.
\end{rmk} \label{sec:fluxCarrier}

\section{Function Spaces and Preliminaries}
For $\Omega$, $\Sigma$ and $\Gamma$ as in the previous section and $\Omega'
\subseteq \Omega$, we denote by $L^p(\Omega')$ and $W^{m,p}(\Omega')$ the usual
Lebesgue and Sobolev spaces with norms $\norm{\cdot}_{L^p(\Omega')}$ and
$\norm{\cdot}_{W^{m,p}(\Omega')}$, respectively. In $L^2(\Omega')$, when there
is no confusion, we use the relaxed notation $\norm{\cdot}$ for the norm and
$(\cdot,\cdot)$ for the inner product. $W^{m,p}_{loc}(\overline{\Omega})$
($L^p_{loc}(\overline{\Omega})$) denotes the space of functions $u$ such that
$u \in W^{m,p}(\Omega')$ ($u \in L^p(\Omega')$) for all bounded $\Omega'
\subset \Omega$. Vector and tensor fields are denoted by boldface letters and,
with an abuse, we employ the same notation for the spaces of scalar and vector
functions.

Let 
\begin{equation*}
  \mathcal{D}_0^\infty = \{\V\psi \in C_0^\infty(\Omega\cup\Gamma):
  \,\div\V\psi = 0 \text{ in } \Omega, \;\;\V\psi = \beta\V e_1 \text{ on }
  \Gamma, \text{ for some } \beta \in \mathbb{R} \}.
\end{equation*}
Where $C_0^\infty(\Omega\cup\Gamma)$ indicates the space of smooth functions
with compact support in $\Omega\cup\Gamma$. We denote by $\mathcal{D}$ and
$\mathcal{D}^1$ the Banach spaces obtained as the completion of
$\mathcal{D}_0^\infty$ with respect to the norms of $L^2(\Omega)$ and
$W^{1,2}(\Omega)$, respectively.
\begin{rmk} \label{rmk:DvGradv}
  For any $\V\psi \in \mathcal{D}_0^\infty$, it can be shown that
  $\norm{\grad\V\psi} = \sqrt{2}\norm{\V D(\V\psi)}$ and so by Poincar\'e
  inequality, $\norm{\V D(\V\psi)}$, $\norm{\grad\V\psi}$ and
  $\norm{\V\psi}_{W^{1,2}(\Omega)}$ are all equivalent norms. This can be
  extended by a density argument to all functions in $\mathcal{D}^1$.
\end{rmk}
The following Lemma provides an important estimate for the type of nonlinear
terms that will be encountered later:
\begin{lem}\label{lem:nonlinearBound}
  There is a constant $c_q = c_q(\Omega, \mu, \rho)$, such that for all $\V\psi
  \in \mathcal{D}_0^\infty$ with $\V\psi|_\Gamma = \beta\V e_1$,
  \begin{equation*}
    \abs{((\V\psi-\beta\V e_1) \cdot \grad\V V, \V\psi)} \le
    c_q \norm{\phi}_{W^{1,2}_T} \norm{\grad\V\psi}^2.
  \end{equation*}
\end{lem}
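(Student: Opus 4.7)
The starting point is the specific separated form of the flux carrier recorded in \eqref{eq:chiForm}, namely $\V V(\V x,t)=\phi(t)\,\V\chi(\V x)$, where $\V\chi$ is a fixed divergence-free field vanishing on $\Sigma$, equal to $\V e_1$ on $\Gamma$, and enjoying the structural property \ref{itm:VFar} that $\V\chi(\V x)$ depends only on the cross-sectional variables for $\abs{\V x}>C$. With this separation $\grad\V V(\V x,t)=\phi(t)\grad\V\chi(\V x)$, and the 1D embedding $W^{1,2}(0,T)\hookrightarrow C([0,T])$ gives $\abs{\phi(t)}\le c\,\norm{\phi}_{W^{1,2}_T}$. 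Hence the plan reduces to establishing a purely spatial inequality of the form $\abs{((\V\psi-\beta\V e_1)\cdot\grad\V\chi,\V\psi)}\le c\,\norm{\grad\V\psi}^2$ uniformly in $\V\psi\in\mathcal{D}_0^\infty$, after which one recovers the stated bound by reinstating the factor $\abs{\phi(t)}$.

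To prove the spatial estimate, I would split $\Omega=\Omega_R\cup\Omega^R$ with $\Omega_R:=\Omega\cap\{\abs{\V x}<R\}$ and $R$ chosen so large that \ref{itm:VFar} applies on $\Omega^R$. On the far-field piece $\Omega^R$ one has $\partial_{x_1}\V\chi=0$, which is the crucial cancellation: the problematic $-\beta\V e_1\cdot\grad\V\chi$ contribution simply vanishes, and only $\sum_{i\ne 1}\psi_i(\partial_{x_i}\V\chi)\cdot\V\psi$ survives. Bounding $\grad_\perp\V\chi$ by its $L^\infty$-norm on $\Omega^R$ (finite by the construction of $\V\chi$) and applying the cross-sectional Poincar\'e inequality slice-by-slice, which is available since $\V\psi|_\Sigma=0$, yields $\int_{\Omega^R}\abs{\V\psi}^2\le c\int_{\Omega^R}\abs{\grad\V\psi}^2$ and hence the desired quadratic control on $\Omega^R$.

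On the near-field piece $\Omega_R$, which is bounded and Lipschitz, $\V\chi\in W^{1,\infty}(\Omega_R)$, so H\"older's inequality gives $\abs{I_R}\le\norm{\grad\V\chi}_{L^\infty(\Omega_R)}\bigl(\norm{\V\psi}_{L^2(\Omega_R)}^2+\abs{\beta}\,\abs{\Omega_R}^{1/2}\norm{\V\psi}_{L^2(\Omega_R)}\bigr)$. The Poincar\'e inequality handles $\norm{\V\psi}_{L^2(\Omega_R)}$, and the scalar $\abs{\beta}$ is controlled by the trace theorem applied to the boundary datum $\V\psi|_\Gamma=\beta\V e_1$ together with the norm equivalence of Remark \ref{rmk:DvGradv}, yielding $\abs{\beta}\le c\,\norm{\grad\V\psi}$. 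Summing the contributions from $\Omega_R$ and $\Omega^R$ and multiplying by $\abs{\phi(t)}\le c\,\norm{\phi}_{W^{1,2}_T}$ delivers the claim, with a constant $c_q$ depending on $\Omega$, $\mu$, $\rho$ through the various Poincar\'e, trace, and $L^\infty$-on-$\V\chi$ constants.

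The essential obstacle is that $\V w:=\V\psi-\beta\V e_1$ is \emph{not} compactly supported, which forbids any direct H\"older estimate involving $\norm{\grad\V\chi}_{L^2(\Omega)}$ (infinite for any channel-wide profile). Property \ref{itm:VFar} is precisely the device that removes this difficulty: the translation-invariance of $\V\chi$ along $\V e_1$ in the exits annihilates the non-decaying part $-\beta\V e_1$, leaving an integrand that is quadratic in $\V\psi$ itself and therefore amenable to Poincar\'e on each cross-section. The remainder of the proof is bookkeeping around the cut-off radius $R$ and collecting the resulting constants.
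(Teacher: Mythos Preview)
Your near/far decomposition and the use of the cancellation $\partial_{x_1}\V\chi=0$ in the exits are exactly the mechanism the paper employs. The gap is in how you read the structure of the flux carrier. Equation \eqref{eq:chiForm} does \emph{not} say $\V V(\V x,t)=\phi(t)\V\chi(\V x)$; it says the generalized $T$-periodic Poiseuille flow has the form $\V\chi(\V x,t)=\chi(x_2,x_3,t)\V e_1$, i.e.\ it is genuinely time-dependent with no separation of variables, and $\V V$ is a (time-dependent) extension of it into $\Omega_0$ with $\V V=0$ on $\Gamma\cup\Sigma$ (so your claim ``$\V\chi=\V e_1$ on $\Gamma$'' is also off). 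Consequently you cannot factor out $\phi(t)$ and invoke the 1D embedding; the factor $\norm{\phi}_{W^{1,2}_T}$ in the statement arises instead from the a~priori bounds \eqref{eq:carrierP}$_1$ and \eqref{eq:carrierPext}$_1$, in particular $\norm{\grad\V V(t)}_{L^2(\Omega_0)}\le c_v\norm{\phi}_{W^{1,2}_T}$ and $\norm{\grad\V\chi(t)}_{L^2(\Pi)}\le c_v\norm{\phi}_{W^{1,2}_T}$ uniformly in $t$.

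This also explains why your $L^\infty$ bounds on $\grad\V\chi$ are the wrong tool: the available estimates give only $W^{1,2}$-in-space control (uniform in time), which in three dimensions does not yield $\grad\V V\in L^\infty$. The paper therefore uses the H\"older split $L^4\times L^2\times L^4$ on $\Omega_0$, bounding $\norm{\V\psi-\beta\V e_1}_{L^4(\Omega_0)}$ and $\norm{\V\psi}_{L^4(\Omega_0)}$ by $\norm{\grad\V\psi}$ via Sobolev embedding and Poincar\'e (the first vanishes on $\Gamma$, the second on $\Sigma$), and pairing these with $\norm{\grad\V V}_{L^2(\Omega_0)}\le c_v\norm{\phi}_{W^{1,2}_T}$. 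On $\Omega\setminus\Omega_0$ your cancellation argument is correct; one then applies H\"older slice-by-slice with $\norm{\grad\V\chi}_{L^2(\Pi)}$ against $\norm{\V\psi}_{L^4}^2$, again closed by Poincar\'e/Sobolev using $\V\psi|_\Sigma=0$. With these corrections your outline becomes the paper's proof.
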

\begin{proof}
  Following \cite[pp.~321]{MR2196495} and using H\"older's inequality
  \begin{align*}
    \abs{((\V\psi-\beta\V e_1) \cdot \grad\V V, \V\psi)} &\le
    \int_{\Omega_0} \abs{(\V\psi-\beta\V e_1) \cdot \grad\V V \cdot
      \V\psi} \;dx + \int_{\Omega\setminus\Omega_0}
    \abs{\V\psi \cdot \grad\V V \cdot \V\psi} \;dx \\
    &\le \norm{\V\psi-\beta\V e_1}_{L^4(\Omega_0)} \norm{\grad\V
      V}_{L^2(\Omega_0)} \norm{\V\psi}_{L^4(\Omega_0)} \\
    &\hphantom{\le} + \int_{-\infty}^{-X_0} \int_\Pi \abs{\V\psi \cdot \grad\V
      \chi \cdot \V\psi} \, dS\,dx_1 + \int_{X_0}^{+\infty} \int_\Pi
    \abs{\V\psi \cdot \grad\V \chi \cdot \V\psi} \, dS\,dx_1 \\
    &\le \norm{\V\psi-\beta\V e_1}_{L^4(\Omega_0)} \norm{\grad\V
      V}_{L^2(\Omega_0)} \norm{\V\psi}_{L^4(\Omega_0)} \\
    &\hphantom{\le} + \norm{\grad\V \chi}_{L^2(\Pi)}
    \norm{\V\psi}^2_{L^4(\Omega\setminus\Omega_0)}.
  \end{align*}
  Noting that $\V\psi - \beta\V e_1$ vanishes on $\Gamma$, and $\V\psi$
  vanishes on $\Sigma$, the statement follows from Sobolev embedding
  theorem, the Poincar\'e inequality, \eqref{eq:carrierP}$_1$ and
  \eqref{eq:carrierPext}$_1$.
\end{proof}
\begin{cor} \label{cor:nonlinearBound}
  Let $\V u \in W^{1,2}(\Omega')$ for some $\Omega' \subseteq \Omega$
  such that $\V u|_\Sigma = 0$ and $\V u|_\Gamma = \beta \V e_1$, then
  \begin{equation*}
    \left|\int_{\Omega'}(\V u-\beta\V e_1) \cdot \grad\V V \cdot \V
    u \,dx \right| \le c_q \norm{\phi}_{W^{1,2}_T}
    \norm{\grad\V u}^2_{L^2(\Omega')}.
  \end{equation*}
\end{cor}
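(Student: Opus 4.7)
The plan is to obtain this estimate by transcribing the proof of Lemma \ref{lem:nonlinearBound} with $\V u$ in place of $\V\psi$, on the observation that that proof never uses divergence-freeness, smoothness, or compact support of $\V\psi$: it rests only on Hölder's inequality, the three-dimensional Sobolev embedding $W^{1,2}\hookrightarrow L^4$, Poincaré's inequality for fields vanishing on $\Sigma$ or on $\Gamma$, and the cross-sectional form of the flux carrier $\V\chi$ from \eqref{eq:chiForm}. Each of these ingredients remains available for $\V u \in W^{1,2}(\Omega')$ under the hypotheses of the corollary.

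Concretely, I would split the integral over $\Omega'$ as the sum of contributions on $\Omega'\cap\Omega_0$ and on $\Omega'\setminus\Omega_0$, exactly mirroring the decomposition used in the Lemma. On the bounded piece Hölder yields the triple product $\norm{\V u-\beta\V e_1}_{L^4}\norm{\grad\V V}_{L^2(\Omega_0)}\norm{\V u}_{L^4}$, which the Sobolev embedding together with Poincaré (applied to $\V u-\beta\V e_1$ vanishing on $\Gamma$ and to $\V u$ vanishing on $\Sigma$) bounds by a constant times $\norm{\grad\V V}_{L^2(\Omega_0)}\norm{\grad\V u}_{L^2(\Omega')}^2$. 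On the exit region the specific form of $\V\chi$ yields a bound of the form $\norm{\grad\V\chi}_{L^2(\Pi)}\norm{\V u}_{L^4(\Omega'\setminus\Omega_0)}^2$, again controlled by $\norm{\grad\V u}_{L^2(\Omega')}^2$. Then \eqref{eq:carrierP}$_1$ and \eqref{eq:carrierPext}$_1$ convert the $\norm{\grad\V V}_{L^2(\Omega_0)}$ and $\norm{\grad\V\chi}_{L^2(\Pi)}$ factors into $\norm{\phi}_{W^{1,2}_T}$, giving the stated inequality with the same constant $c_q$.

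The only step requiring care is keeping the Poincaré and Sobolev constants uniform in the arbitrary subdomain $\Omega'$. The natural fix is to extend $\V u$ by zero through $\Sigma$ to $\tilde{\V u} \in W^{1,2}(\Omega)$; this is permitted since $\V u|_\Sigma = 0$, and the identity $\norm{\grad\tilde{\V u}}_{L^2(\Omega)} = \norm{\grad\V u}_{L^2(\Omega')}$ together with the Poincaré and Sobolev inequalities on the whole channel for fields vanishing on $\Sigma$ supplies a constant depending only on $\Omega$. I expect this routine reduction to be the main (and essentially only) technical point; past it, the Lemma's proof transcribes essentially verbatim.
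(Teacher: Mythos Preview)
Your approach is exactly what the paper intends: the corollary is stated immediately after Lemma~\ref{lem:nonlinearBound} without proof, and the implicit argument is precisely to rerun that lemma's estimates for a general $\V u\in W^{1,2}$, since nothing in the proof used smoothness, compact support, or the divergence constraint.

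One small correction to your last paragraph: the extension you describe is garbled. Extending ``through $\Sigma$'' would take you outside the channel, not from $\Omega'$ to $\Omega$; and extending by zero from $\Omega'$ to $\Omega$ would require $\V u$ to vanish on $\partial\Omega'\cap\Omega$, which is not assumed. Fortunately no extension is needed. The Poincar\'e inequality for fields vanishing on $\Sigma$ is a cross-sectional estimate whose constant depends only on $\Pi$, hence is uniform over all $\Omega'\subseteq\Omega$; and the Sobolev embedding $W^{1,2}\hookrightarrow L^4$ for such fields follows by extending by zero outside the channel to $\mathbb{R}^3$, again with a domain-independent constant. With that clarification your argument goes through and yields the same constant $c_q$.
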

The space of smooth periodic functions in $\mathbb{R}$ with period $T>0$, is
denoted by $C_T^\infty(\mathbb{R})$ and the completion in $W^{m,p}([0,T])$
(respectively, $L^p([0,T])$) of the restriction of such functions to $[0,T]$,
will be indicated with $W_T^{m,p}$ (respectively, $L_T^p$). Given the Banach
space $X$ with norm $\norm{\cdot}_X$, a function $f: [0,T] \rightarrow X$
belongs to $L^q(0,T;X)$ if,
\begin{equation*}
  \begin{cases} \displaystyle
    \left(\int_0^T \norm{f}_X^q \;dt\right)^{1/q} < \infty, & 1 \le q < \infty,
    \\ \displaystyle
    \esssup_{t\in[0,T]} \norm{f}_X < \infty, & q = \infty.
  \end{cases}
\end{equation*}

It is, of course, understood that by a $T$-periodic function $f \in L^q(0,T;X)$
(or $f \in L_T^p$), we mean $f: \mathbb{R} \rightarrow X$ such that
$\norm{f}_{L^q(s,t;X)} = \norm{f}_{L^q(T+s,T+t;X)}$ (respectively,
$\norm{f}_{L^p([s,t])} = \norm{f}_{L^p([T+s,T+t])}$) for all $t<s$.

\section{Re-formulation and Physical Considerations} \label{sec:flux}

For $X>0$, let $\Omega^X = \{\V x \in \Omega: x_1>X\}$ and $\Omega^{-X} = \{\V
x \in \Omega: x_1 < -X\}$. Also, let $X_0 = \diam(\mathcal{B})+1$, where
$\diam(\mathcal{B})$ is the diameter of the domain occupied by the mass
$\mm$. Let $\Omega_0$ = $\Omega \setminus (\overline{\Omega^{X_0}
\cup \Omega^{-X_0}})$. Then following \cite[pp.~316--317]{MR2196495} we
consider a ``flux carrier'' $\V V$ satisfying the following conditions:
{\renewcommand{\theenumi}{(\roman{enumi})}
 \renewcommand{\labelenumi}{\theenumi}
  \begin{enumerate}
  \item $\V V \in W^{1,2}(0,T;L^2_{loc}(\Omega)) 
    \cap L^2(0,T;W^{2,2}_{loc}(\Omega))$.
  \item $\V V(t+T)=\V V(t)$ for all $t\in \mathbb R$.
  \item $\div\V V = 0$ in $\Omega$.
  \item $\V V = 0$ on $\Gamma \cup \Sigma$.
  \item $\V V(\V x,t) = \V \chi(\V x,t)$ for all $t\in \mathbb R$ and $|x_1|\ge
    X_0$. \label{itm:VFar}
  \end{enumerate}
}
The vector field $\V\chi$ is the \emph{generalized $T$-periodic Poiseuille
flow} and is a $T$-periodic solution to the Navier-Stokes equations (with
no-slip boundary conditions) in the infinite cylindrical channel:
\begin{equation}
  \begin{aligned}
    &\left.\begin{aligned}
      &\frac{\partial\V\chi}{\partial t} + \V\chi \cdot \grad\V\chi -
      \frac{\mu}{\rho}\laplace\V\chi = \grad q, \\
      &\div\V\chi = 0,
    \end{aligned}\right\} \text{ in }\Pi \times \mathbb{R},\\
    &\V\chi(\V x, t) = 0, \qquad \V x \in \Sigma, \;t \in \mathbb{R}\\
    &\V\chi(\V x, t) = \V\chi(\V x, t+T), \qquad (\V x, t) \in \Pi
    \times \mathbb{R},
  \end{aligned}
\end{equation}
satisfying the following two properties
\begin{gather}
    \V \chi(\V x,t)=\chi(x_2,x_3,t)\V e_1, \label{eq:chiForm}
    \\
    \int_{\mathcal S} \V \chi(t)\cdot \V n_s \,dS =\phi(t), \qquad
    \forall t \in \mathbb{R}, \nonumber
\end{gather}
for a given $T$-periodic flow rate $\phi$. The existence (and uniqueness) of
$\V \chi$ corresponding to $\phi\in W^{1,2}_T$ and its higher regularity when
$\phi \in W^{3,2}_T$, has been established in \cite[Theorem 1 and
  Remark1]{MR2196495}, and, in fact, we have the following estimates:
\begin{equation}\label{eq:carrierP}
  \begin{aligned}
    \norm{\V \chi}_{L^2(0,T;W^{2,2}(\Pi)},\ 
    \norm{\V \chi}_{C((0,T);W^{1,2}(\Pi))},\ 
    \norm{\V \chi}_{W^{1,2}(0,T;L^{2}(\Pi))}
    &\le c_v\norm{\phi}_{W^{1,2}_T}, \\
    \norm{\V \chi}_{W^{1,2}(0,T;W^{2,2}(\Pi))},\ 
    \norm{\V \chi}_{C^1((0,T);W^{1,2}(\Pi))},\ 
    \norm{\V \chi}_{W^{2,2}(0,T;L^{2}(\Pi))}
    &\le c'_v\norm{\phi}_{W^{2,2}_T}, \\
    \norm{\V \chi}_{W^{2,2}(0,T;W^{2,2}(\Pi))},\ 
    \norm{\V \chi}_{C^2((0,T);W^{1,2}(\Pi))},\ 
    \norm{\V \chi}_{W^{3,2}(0,T;L^{2}(\Pi))}
    &\le c''_v\norm{\phi}_{W^{3,2}_T},
  \end{aligned}
\end{equation}
and the flux carrier, $\V V$, satisfies \cite[pp.~316--317]{MR2196495}
\begin{equation}\label{eq:carrierPext}
  \begin{aligned}
    \norm{\V V}_{L^2(0,T;W^{2,2}(\Omega_0))},\ 
    \norm{\V V}_{C((0,T);W^{1,2}(\Omega_0))},\ 
    \norm{\V V}_{W^{1,2}(0,T;L^{2}(\Omega_0))}
    &\le c_v\norm{\phi}_{W^{1,2}_T}, \\
    \norm{\V V}_{W^{1,2}(0,T;W^{2,2}(\Omega_0))},\ 
    \norm{\V V}_{C^1((0,T);W^{1,2}(\Omega_0))},\ 
    \norm{\V V}_{W^{2,2}(0,T;L^{2}(\Omega_0))}
    &\le c'_v\norm{\phi}_{W^{2,2}_T}, \\
    \norm{\V V}_{W^{2,2}(0,T;W^{2,2}(\Omega_0))},\ 
    \norm{\V V}_{C^2((0,T);W^{1,2}(\Omega_0))},\ 
    \norm{\V V}_{W^{3,2}(0,T;L^{2}(\Omega_0))}
    &\le c''_v\norm{\phi}_{W^{3,2}_T},
  \end{aligned}
\end{equation}
where $c_v$, $c'_v$ and $c''_v$ are positive constants depending at most on
$\rho$, $\mu$, and $\Omega$.

Then $(\V u, p, z)$ is a solution of \eqref{eq:noninertial} if and only if ($\V
v = \V u - \V V, p, z)$ satisfies
\begin{equation} \label{eq:noninertial0phi}
  \begin{aligned} 
    &\left. \begin{aligned}
      &\begin{aligned}
         \frac{\partial\V v}{\partial t} + (\V v- \frac{dz}{dt}\V e_1) \cdot
         \grad\V v = &\frac1\rho \div\V T(\V v, p) \\
         &-\V V \cdot \grad\V v -(\V v - \frac{dz}{dt}\V e_1) \cdot \grad\V V +
         \V f,
       \end{aligned}\\
      &\div\V v = 0,
    \end{aligned} \right\} \text{ in } \Omega \times \mathbb{R}, \\
    &\;\;\mm\frac{d^2z}{dt^2} + \ks z = 
    \int_{\Gamma} \V e_1 \cdot \V T(\V v, p) \cdot \V n \;dS + g, \\
    & \int_{\mathcal{S}} \V v(t) \cdot \V n_s \;dS= 0, \quad \forall t \in
    \mathbb{R}, \\
    &\;\;\V v = \frac{dz}{dt}\V e_1, \qquad \text{ on } \Gamma,\\
    &\;\;\V v = 0, \qquad\quad\text{ on } \Sigma, \\
    &\;\;\V v(t+T) = \V v(t), \quad z(t+T)=z(t).
  \end{aligned}
\end{equation} 
In the above,
\begin{equation} \label{eq:forces}
  \begin{aligned}
    \V f(\V x, t) &= \frac{\mu}{\rho} \laplace\V V - \V V \cdot \grad\V V 
    -\frac{\partial\V V}{\partial t},\\
    g(t) &= \mu \int_{\Gamma} \V e_1 \cdot (\grad\V V + (\grad\V V)^T)
    \cdot \V n \;dS.
  \end{aligned}
\end{equation}
\begin{rmk} \label{rmk:phiTrivial}
  In the trivial case $\phi(t) \equiv \V 0$, by uniqueness \cite[Theorem
    1]{MR2196495}, $\V\chi \equiv \V 0$. In this case, we choose the extension
  $\V V \equiv \V 0$ although there may be nonzero corresponding extensions. 
\end{rmk}
\begin{rmk} \label{rmk:fvHomo}
  It should be noted that in \eqref{eq:forces}$_1$, $\V f \equiv \V 0$ or (more
  generally) $\V f = \grad q \in L^2_{loc}(0,\infty;L^2_{loc}(\Omega))$ implies
  that $\V\chi \equiv \V 0$ and hence by the remark above $\V V = \V 0$ (and
  $\phi(t) = 0$). To see this, consider $X_1$ and $X_2$ such that $X_2 > X_1 >
  X_0$, then on $\Omega'=\Omega^{X_2} \setminus \overline{\Omega^{X_1}}$, $\V f
  = \V 0$ yields
  \begin{equation*}
    \frac\mu\rho \laplace \V\chi - \frac{\partial \V\chi}{\partial t} = 0,
  \end{equation*}
  taking the ($L^2$-)inner product of the above with $\V\chi$ in $\Omega'$ and
  using Poincar\'e inequality, we find
  \begin{equation*}
    \frac{d\norm{\V \chi}_{L^2(\Omega')}^2}{d t} - c \norm{\V
    \chi}_{L^2(\Omega')}^2 = 0.
  \end{equation*}
  But the only $T$-periodic solution to the above equation is $\norm{\V
    \chi}_{L^2(\Omega')} = 0$.
\end{rmk}
\begin{rmk} \label{rmk:forces}
  $\V f$ and $g$ do not need to be of the form in \eqref{eq:forces}. In fact,
  $g$ can be modified to include an external $T$-periodic force, $\tilde{g}$,
  on the mass $\mm$; and as long as mathematical analysis is concerned, $\V f$
  can also be modified to include any suitable $T$-periodic body force,
  $\V{\tilde{f}}$, acting on the fluid:
  \begin{equation} \tag{\ref{eq:forces}$'$} \label{eq:altForces}
  \begin{aligned}
    \V f(\V x, t) &= \frac{\mu}{\rho} \laplace\V V - \V V \cdot \grad\V V 
    -\frac{\partial\V V}{\partial t} + \V{\tilde{f}},\\
    g(t) &= \mu \int_{\Gamma} \V e_1 \cdot (\grad\V V + (\grad\V V)^T)
    \cdot \V n \;dS + \tilde{g}.
  \end{aligned}
  \end{equation}
  However, if $\V{\tilde{f}} \ne 0$ is originally present in
  \eqref{eq:master}$_1$, then, without loss of generality, we shall choose $\V
  V$ such that,
  \begin{equation*}
    \frac{\partial\V V}{\partial t} + \V V \cdot \grad\V V - \frac{\mu}{\rho}
    \laplace\V V \ne \grad q + \V{\tilde{f}}, 
  \end{equation*}
  for any $\grad q \in L^2_{loc}((0,\infty)\times\Omega)$. This ensures, by
  Remark~\ref{rmk:fvHomo}, that the corresponding ``homogeneous'' system to
  \eqref{eq:master} (or \eqref{eq:noninertial0phi}) will be obtained only when
  all the external forcing mechanisms are identically zero: $\V{\tilde{f}}
  \equiv 0$, $\V V \equiv 0$ ($\phi(t) \equiv 0$) and $\tilde{g} \equiv
  0$.
  
  It should be emphasized that even in the presence of $\V{\tilde{f}}$ and
  $\tilde{g}$, \emph{the flow rate, $\phi(t)$, is still prescribed}. In this
  case, from a physical point of view, there are several driving mechanisms
  present. Concerning the regularity of the external forcing, in the case of
  weak solutions, we assume that,
  \begin{align}\label{eq:forcesRegularityWeak}
    \phi \in W^{1,2}_T, &&\V{\tilde{f}} \in L^2(0,T;L^2(\Omega)) &&
    \tilde{g} \in L^2([0,T]),
  \end{align}
  whereas for strong solutions, we assume
  \begin{align} \label{eq:forcesRegularityStrong}
    \phi \in W^{3,2}_T, &&\V{\tilde{f}} \in W^{1,\infty}(0,T;L^2(\Omega)), &&
    \tilde{g} \in W^{1,\infty}_T.
  \end{align}
\end{rmk}
\begin{rmk}\label{rmk:suppf}
  Considering $\phi \in W^{1,2}_T$ (for the case of weak solutions discussed
  below), from property \ref{itm:VFar} above, it follows that $\V V(\V x,t) =
  \V\chi(\V x,t)$ for all $\V x\in \Omega\setminus\Omega_0$ and $t\in \mathbb
  R$. We recall that $\V \chi$ solves the time-periodic Navier-Stokes equations
  in $\Omega\setminus\Omega_0$ with the corresponding pressure field
  \cite[Section 2]{MR2196495},
  \begin{equation}\label{eq:pressureP}
    \begin{split}
      \tilde{p}(t) &= -\psi(t)x_1,
      \\
      \psi(t) &:= \frac{1}{\abs{\Pi}} \left(\frac{d\phi(t)}{dt} -
      \frac{\mu}{\rho}\int_\Pi\laplace\chi \;dx \right). 
    \end{split}
  \end{equation}
  So, in the absence of the external forces $\V{\tilde{f}}$ and $\tilde g$,
  redefining the forcing terms in \eqref{eq:forces} as:
  \begin{equation}\label{eq:forcePext}
    \begin{aligned}
      \V f(\V x,t) &= \frac{\mu}{\rho} \laplace\V V - \V V \cdot \grad\V V -
      \frac{\partial\V V}{\partial t} - \nabla \tilde p, \\
      g(t) &= \mu \int_\Gamma \V e_1 \cdot (\grad\V V + (\grad\V V)^T) \cdot \V
      n \,dS - \rho \int_\Gamma \tilde p n_1\,dS,
    \end{aligned}
  \end{equation}
  and adding the pressure term $\rho\tilde p \V I$ in the Cauchy stress tensors
  in \eqref{eq:noninertial0phi}, we get that
  \begin{equation} \label{eq:fSupp}
    \supp \V f\subset \Omega_0.
  \end{equation}
  In addition, \eqref{eq:carrierPext}$_1$ and usual estimates on the nonlinear
  term lead to
  \begin{equation*}
    \norm{\V f}_{L^2(0,T;L^2(\Omega))}\le c_f\norm{\phi}_{W^{1,2}_T},
  \end{equation*}
  for some positive constant $c_f=c_f(\rho,\mu,\Omega)$. Also, using the trace
  inequality and \eqref{eq:pressureP}, we have the following bound for the
  force $g$ in \eqref{eq:forcePext}$_2$:
  \begin{align*}
    \abs{g}^2 &= \left| \mu \int_\Gamma \V e_1 \cdot
    (\grad\V V+(\grad\V V)^T) \cdot \V n \; dS - \rho
    \int_\Gamma \tilde p n_1 \;dS \right|^2 \\
    &\le a \left( \int_\Gamma \abs{\grad\V V}^2 \;dS
    + \left| \frac{d\phi}{dt} \right|^2
    + \norm{\V\chi}_{W^{2,2}(\Pi)}^2 \right) \\
    &\le a \left( \int_{\partial\Omega_0}\abs{\grad\V V}^2 \;dS
    + \left| \frac{d\phi}{dt} \right|^2
    + \norm{\V\chi}_{W^{2,2}(\Pi)}^2 \right) \\
    &\le a' \left( \norm{\grad\V V}_{W^{1,2}(\Omega_0)}^2
    + \left| \frac{d\phi}{dt} \right|^2
    + \norm{\V\chi}_{W^{2,2}(\Pi)}^2 \right),
  \end{align*}
  where $a$ and $a'$ are constants depending on $\rho$, $\mu$ and
  $\Omega$. Hence by \eqref{eq:carrierPext}$_1$ (and \cite[eq.~11]{MR2196495}),
  for some positive constant $c_g=c_g(\rho, \mu, \Omega)$,
  \begin{equation*}
    \norm{g}_{L^2_T} \le c_g \norm{\phi}_{W^{1,2}_T}\,.
  \end{equation*}
  When higher regularities in \eqref{eq:forcesRegularityStrong} are assumed (in
  the case of strong solutions), using a similar argument as above and
  \eqref{eq:carrierPext}$_{2,3}$ (and \cite[Remark.~1]{MR2196495}), we also
  have
  \begin{align*}
    &\norm{\V f}_{L^\infty(0,T;L^2(\Omega)} \le c'_f \norm{\phi}_{W^{2,2}_T}\,,
    && \norm{g}_{L^\infty_T} \le c'_g \norm{\phi}_{W^{2,2}_T}\,, \\
    &\norm{\frac{\partial\V f}{\partial t}}_{L^\infty(0,T;L^2(\Omega))} \le
    c''_f \norm{\phi}_{W^{3,2}_T}\,,
    && \norm{\frac{\partial g}{\partial t}}_{L^\infty_T} \le c''_g
    \norm{\phi}_{W^{3,2}_T}\,,
  \end{align*}
  where, again, positive constants above are at most functions of $\rho$,
  $\mu$, and $\Omega$.

  When the external body forces $\V{\tilde{f}}$ and $\tilde g$ are present,
  with the regularity assumed in \eqref{eq:forcesRegularityWeak} (and/or
  \eqref{eq:forcesRegularityStrong}), and $\V f$ and $g$ are given by
  \begin{equation} \tag{\ref{eq:forcePext}$'$} \label{eq:altforcePext}
    \begin{aligned}
      \V f(\V x,t) &= \frac{\mu}{\rho} \laplace\V V - \V V \cdot \grad\V V -
      \frac{\partial\V V}{\partial t} - \nabla \tilde p + \V{\tilde{f}}, \\
      g(t) &= \mu \int_\Gamma \V e_1 \cdot (\grad\V V + (\grad\V V)^T) \cdot \V
      n \,dS - \rho \int_\Gamma \tilde p n_1\,dS + \tilde g,
    \end{aligned}
  \end{equation}
  we have the following obvious modifications to the above estimates for the
  forces:
  \begin{align}
    \norm{\V f}_{L^2(0,T;L^2(\Omega))} &\le c_f\norm{\phi}_{W^{1,2}_T} +
    \norm{\V{\tilde{f}}}_{L^2(0,T;L^2(\Omega))}\,, \label{eq:fSmallnessBound}
    \\
    \norm{g}_{L^2_T} &\le c_g \norm{\phi}_{W^{1,2}_T}
    + \norm{\tilde g}_{L^2_T}\,, \label{eq:gSmallnessBound} \\
    \norm{\V f}_{L^\infty(0,T;L^2(\Omega))} &\le c'_f\norm{\phi}_{W^{2,2}_T} +
    \norm{\V{\tilde{f}}}_{L^\infty(0,T;L^2(\Omega))}\,, \\
    \norm{g}_{L^\infty_T} &\le c'_g \norm{\phi}_{W^{2,2}_T}
    + \norm{\tilde g}_{L^\infty_T}\,,  \\
    \norm{\frac{\partial\V f}{\partial t}}_{L^\infty(0,T;L^2(\Omega))} &\le
    c''_f \norm{\phi}_{W^{3,2}_T}
    + \norm{\frac{\partial\V{\tilde f}}{\partial
        t}}_{L^\infty(0,T;L^2(\Omega))}\,, \label{eq:dfdtSmallnessBound} \\
    \norm{\frac{\partial g}{\partial t}}_{L^\infty_T} &\le
    c''_g \norm{\phi}_{W^{3,2}_T}
    + \norm{\frac{\partial\tilde g}{\partial t}}_{L^\infty_T}\,.
    \label{eq:dgdtSmallnessBound} 
  \end{align}
\end{rmk}

\section{Weak Solutions} \label{sec:weak}

In this section we first give the definition of a weak solution to
\eqref{eq:noninertial0phi} and its equivalence to the original equations when
the weak solutions posses enough regularity and then we prove the existence of
such solutions along with the energy inequalities that they satisfy. 
\begin{dfn}
  A pair $(\V u(\V x, t), z(t))$ is called a $T$-periodic weak solution to
  \eqref{eq:noninertial}, corresponding to a flux $\phi(t) \in W^{1,2}_T$,
  with augmented $T$-periodic forces $\V{\tilde{f}} \in
  L^2(0,T;L^2(\Omega))$ and $\tilde{g} \in L^2([0,T])$ as in
  Remark \ref{rmk:forces}, if there is a $T$-periodic $\V V \in
  W^{1,2}(0,T;L^2_{loc}(\Omega)) \cap L^2(0,T;W^{2,2}_{loc}(\Omega))$ with
  $\int_\mathcal{S} \V V \cdot \V n_s \;dS = \phi(t)$, such that
  \begin{enumerate}
    \item $\V v = \V u - \V V \in L^\infty(0,T;\mathcal{D}) \cap
      L^2(0,T;\mathcal{D}^1)$ and $z \in W^{1,2}_T$,
  \item For all $\V\psi \in \mathcal{D}_0^\infty$ and $\eta \in
    C_T^\infty(\mathbb{R})$, with $\beta = \V e_1 \cdot \V\psi|_\Gamma$
    \begin{multline} \label{eq:weak1}
      \int_0^T \left\{(\V v,\V\psi) \frac{d\eta}{dt} - \left((\V v -
      \frac{dz}{dt}\V e_1) \cdot \grad\V v, \V\psi \right)\eta  -
      \frac{\mu}{\rho} (\V D(\V v), \V D(\V\psi)) \eta \right.\\\left. + \beta
      \left( \frac{\mm}{\rho} \frac{dz}{dt}\frac{d\eta}{dt} - \frac{\ks}{\rho}
      z\eta \right) \right\}dt \\= \int_0^T \left\{(\V V \cdot \grad\V v,
      \V\psi)\eta + \left((\V v - \frac{dz}{dt}\V e_1) \cdot \grad\V V,
      \V\psi\right)\eta - (\V f, \V\psi)\eta - \frac{\beta}{\rho} g\eta
      \right\}dt.
    \end{multline}
    In the above, $\V f$ and $g$ are given by \eqref{eq:altForces},
  \item For all scalar functions $\theta \in C_0^\infty(\Omega\cup\Gamma)$, and
    for almost all $t \in [0,T]$,
    \begin{equation} \label{eq:weak2}
      (\V v - \frac{dz}{dt}\V e_1, \grad\theta) = 0.
    \end{equation}
  \end{enumerate}
\end{dfn}
\begin{rmk} \label{rmk:weakToStrong}
  It is readily seen that if $(\V v, z)$ are smooth enough functions satisfying
  \eqref{eq:weak1} and \eqref{eq:weak2} for some $\V V$, then there is a
  function $p$ such that $(\V v, p, z)$ satisfies \eqref{eq:noninertial0phi}
  almost everywhere (in space and time) and hence $(\V v + \V V, p, z)$ will
  satisfy \eqref{eq:noninertial}. In fact, setting $\beta = 0$ in
  \eqref{eq:weak1}, integrating by parts and choosing $\eta \in
  C_T^\infty(\mathbb{R})$ such that $\eta(0)=\eta(T)=0$, we deduce
  \eqref{eq:noninertial0phi}$_1$ and then (by considering arbitrary $\eta$) the
  periodicity condition \eqref{eq:noninertial0phi}$_7$ for $\V v$. Using this
  information in \eqref{eq:weak1} (with $\beta \ne 0$ and after integrating by
  parts), we obtain \eqref{eq:noninertial0phi}$_3$ (with $\eta(0)=\eta(T)=0$)
  and the periodicity condition \eqref{eq:noninertial0phi}$_7$ for $z$ (with
  arbitrary $\eta$). Clearly, \eqref{eq:noninertial0phi}$_2$,
  \eqref{eq:noninertial0phi}$_4$ and \eqref{eq:noninertial0phi}$_6$ hold for
  any $\V v \in \mathcal{D}^1$. \eqref{eq:noninertial0phi}$_5$ follows from
  \eqref{eq:weak2} after integration integration by parts.
\end{rmk}
\begin{thm} \label{thm:weak}
  For any $T>0$, let $\phi(t) \in W^{1,2}_T$ be such that $\phi(t)$ satisfies
  the ``smallness condition'' \eqref{eq:smallnessWeak} below, then for any
  $T$-periodic forces $\V{\tilde{f}} \in L^2(0,T;L^2(\Omega))$
  and $\tilde{g} \in L^2([0,T])$, there exists at least one $T$-periodic weak
  solution to \eqref{eq:noninertial}.
\end{thm}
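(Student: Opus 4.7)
The plan is to follow the strategy sketched in the introduction: construct weak periodic solutions via a Galerkin scheme combined with a Leray--Schauder fixed point argument applied to a linearized problem at each finite-dimensional level, and then pass to the limit.

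First, I would fix a basis $\{\V\psi_k\}_{k\in\mathbb N}$ of $\mathcal{D}^1$ drawn from $\mathcal{D}_0^\infty$, with $\V\psi_k|_\Gamma = \beta_k\V e_1$, and seek Galerkin approximations $\V v_n(\V x,t) = \sum_{k=1}^n c_k(t)\V\psi_k(\V x)$ together with $z_n(t)$ linked by the consistency $\frac{dz_n}{dt}=\sum_k \beta_k c_k$; the weak formulation \eqref{eq:weak1} projected onto the first $n$ modes becomes a system of ODEs in $(c_1,\dots,c_n,z_n)$. Rather than attacking this nonlinear system directly, I freeze an arbitrary $T$-periodic $\V{\tilde v}$ in the $n$-dimensional span and replace the convective term $(\V v - \frac{dz}{dt}\V e_1)\cdot\grad\V v$ by $(\V{\tilde v} - \frac{d\tilde z}{dt}\V e_1)\cdot\grad\V v$, obtaining a linear ODE system with $T$-periodic coefficients and forcing.

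For each such $\V{\tilde v}$, existence of a $T$-periodic solution to the linearized ODE follows from classical Floquet/Fredholm theory. The ``particular'' energy inequality \eqref{eq:G}, obtained by testing with a combination of $\V v$ and an auxiliary ``dual'' quantity designed to recover dissipation of the missing $\ks\abs{z}^2$ term, is the workhorse here: it yields an a priori bound on periodic solutions depending on $\V{\tilde v}$ only through the forcing norms $\norm{\phi}_{W^{1,2}_T}$, $\norm{\V{\tilde f}}_{L^2(0,T;L^2(\Omega))}$ and $\norm{\tilde g}_{L^2_T}$, provided $\norm{\phi}_{W^{1,2}_T}$ is small enough that the nonlinear contribution controlled by Corollary \ref{cor:nonlinearBound} can be absorbed into the viscous dissipation; this is precisely the content of the smallness hypothesis \eqref{eq:smallnessWeak}. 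I then define $\Phi:\V{\tilde v}\mapsto \V v$ on a suitable space of $T$-periodic trajectories (\eg, $C([0,T];\mathrm{span}\{\V\psi_1,\dots,\V\psi_n\})$). Linearity in $\V v$ and continuous dependence on coefficients make $\Phi$ continuous; compactness is automatic in finite dimensions. The same a priori estimate applied to any solution of $\V v = \tau\Phi(\V v)$, $\tau\in[0,1]$, bounds the fixed-point set uniformly in $\tau$, and the Leray--Schauder principle then supplies a fixed point, i.e.\ a $T$-periodic solution $(\V v_n,z_n)$ of the full $n$-th Galerkin system.

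Finally, the uniform bounds delivered by \eqref{eq:G} give $\V v_n$ bounded in $L^\infty(0,T;\mathcal{D})\cap L^2(0,T;\mathcal{D}^1)$ and $z_n$ in $W^{1,2}_T$, independently of $n$. Extracting weakly and weak-$\ast$ convergent subsequences, the linear terms in \eqref{eq:weak1} pass to the limit directly. The hard step is the convective term $(\V v_n-\frac{dz_n}{dt}\V e_1)\cdot\grad\V v_n$: here I would establish strong $L^2(0,T;L^2_{loc}(\Omega))$ convergence of $\V v_n$ using a uniform bound on $\partial_t\V v_n$ in a negative Sobolev norm (read off from the Galerkin equation) together with the Aubin--Lions lemma on bounded subdomains, and complement it by tightness at infinity coming from property \ref{itm:VFar} and the explicit structure \eqref{eq:chiForm} of $\V\chi$ outside $\Omega_0$; the ``far-field'' contributions are then controlled uniformly in $n$ by Lemma \ref{lem:nonlinearBound} under the smallness hypothesis. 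The divergence constraint \eqref{eq:weak2} passes under weak convergence. The main obstacle throughout is the derivation of the particular energy inequality \eqref{eq:G} that restores dissipation of $\ks\abs{z}^2$ in the absence of any mechanical damping on the spring; once that is in hand, the Leray--Schauder step and the Galerkin limit follow a reasonably standard template.
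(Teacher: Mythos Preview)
Your plan follows the paper's proof essentially step for step: Galerkin scheme, linearization of the convective term, Leray--Schauder at each finite level using the particular energy inequality \eqref{eq:G}, and passage to the limit via local strong compactness. The one detail you leave vague---the ``auxiliary dual quantity'' that restores dissipation of $\ks\abs{z}^2$---is made concrete in the paper by choosing the basis so that $\beta_1\neq 0$, multiplying the $\ki=1$ Galerkin equation by $z_n$, and adding a small multiple to the natural energy to form $\mathcal{G}^{\psi_1}_\delta = 2\mathcal{E} + \delta\rho z(\V v,\V\psi_1) + \delta\mm\beta_1 z\,\frac{dz}{dt}$; also, your remark on tightness at infinity is unnecessary since the test functions $\V\psi\in\mathcal{D}_0^\infty$ have compact support, so strong $L^2_{loc}$ convergence already handles the convective term.
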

\begin{proof}
  We consider the flux carrier $\V V$, discussed in Section \ref{sec:flux}, and
  use Faedo-Galerkin approximations to find a solution $(\V v, z)$ to
  \eqref{eq:weak1} and \eqref{eq:weak2}. 

  Let $\{\V\psi_i\}_{i=1,2,\dots} \subset \mathcal{D}_0^\infty$ be a basis of
  $\mathcal{D}^1$ orthonormal in $\mathcal{D}$. We further assume, without loss
  of generality, that $\beta_1 = \V e_1 \cdot \V\psi_1|_\Gamma > 0$ and look
  for approximate solutions,
  \begin{equation*}
    \V v_n(\V x, t) = \sum_{i=1}^n a_n^i(t) \V\psi_i(\V x), \qquad
    \text{ and } \qquad z_n(t),
  \end{equation*}
  where, $a_n^i$ and $z_n$ are required to satisfy
  \begin{equation} \label{eq:GalerkinODE}
    \begin{aligned}
      &A_{i\ki}\frac{da_n^i}{dt} - c_{ij\ki} a_n^i a_n^j +
      \frac{\ks}{\rho} \beta_\ki z_n + (b_{i\ki} + d_{i\ki}) a_n^i = g_\ki +
      f_\ki, \\
      &\frac{dz_n}{dt} = \beta_i a_n^i, \\
      &a_n^i(t+T) = a_n^i(t), \qquad z_n(t+T) = z_n(t).
    \end{aligned}
  \end{equation}
  for $1 \le i,j,\ki \le n$ with summation on repeated indices. In the above,
  $\beta_i = \V e_1 \cdot \V\psi_i|_\Gamma$ and
  \begin{gather*}
    A_{i\ki} = \delta_{i\ki} + \frac{\mm}{\rho} \beta_i \beta_\ki, \qquad
    c_{ij\ki} = \left((\V\psi_i - \beta_i\V e_1) \cdot \grad\V\psi_j,
    \V\psi_\ki \right),\\
    b_{i\ki} = \frac{2\mu}{\rho}(\V D(\V\psi_i), \V D(\V\psi_\ki)), \qquad
    d_{i\ki} = (\V V \cdot \grad\V\psi_i, \V\psi_\ki) + ((\V\psi_i-\beta_i\V
    e_1) \cdot \grad\V V, \V\psi_\ki),\\
    g_\ki = \frac1\rho g \beta_\ki, \qquad f_\ki = (\V f, \V\psi_\ki).
  \end{gather*}
  
  To assert the existence of solutions to \eqref{eq:GalerkinODE},
  consider the following ``linearization'' of \eqref{eq:GalerkinODE}:
  \begin{equation} \label{eq:linearGalerkinODE}
    \begin{aligned}
      &A_{i\ki}\frac{da_n^i}{dt} - c_{ij\ki}
      \tilde{a}_n^i a_n^j + \frac{\ks}{\rho} \beta_\ki z_n + (b_{i\ki} +
      d_{i\ki}) a_n^i = g_\ki + f_\ki, \\
      &\frac{dz_n}{dt} = \beta_i a_n^i, \\
      &a_n^i(t+T) = a_n^i(t), \qquad z_n(t+T) = z_n(t).
    \end{aligned}
  \end{equation}
  where, $\tilde{a}_n^i \in L^2_T$, $1 \le i \le n$, are given $T$-periodic
  functions. The corresponding homogeneous system, with $f_\ki
  \equiv g_\ki \equiv d_{i\ki} \equiv 0$ (See Remarks \ref{rmk:fvHomo} and
  \ref{rmk:forces}), is
  \begin{equation} \label{eq:linearGalerkinODEhom}
    \begin{aligned}
      &A_{i\ki}\frac{da_n^i}{dt} - c_{ij\ki}
      \tilde{a}_n^i a_n^j + \frac{\ks}{\rho} \beta_\ki z_n + b_{i\ki}
      a_n^i = 0, \\
      &\frac{dz_n}{dt} = \beta_i a_n^i, \\
      &a_n^i(t+T) = a_n^i(t), \qquad z_n(t+T) = z_n(t),
    \end{aligned}
  \end{equation}
  and it has only the trivial solution $z_n = a_n^i = 0$, $1 \le i \le
  n$. This can be seen by multiplying the first equation above by $a_n^\ki$ and
  summing over $\ki$, then multiplying the second equation by $\ks z_n/\rho$
  and replacing in the first, to get
  \begin{equation*}
    \frac12 A_{i\ki}\frac{d(a_n^\ki a_n^i)}{dt} + \frac{\ks}{2\rho}
    \frac{dz_n}{dt} + b_{i\ki} a_n^\ki a_n^i = 0.
  \end{equation*}
  Integrating this equation over a period $T$ and using the periodicity
  conditions, it yields $\int_0^T b_{i\ki} a_n^\ki a_n^i \;dt = \int_0^T
  \norm{\V D(\V v_n)}^2 \;dt =0$; that is, by Remark \ref{rmk:DvGradv},
  $\V v_n = 0$ (and hence $a_n^i = 0,\;1 \le i \le n$). Replacing this
  information back in \eqref{eq:linearGalerkinODEhom}$_1$, we get
  $\beta_\ki z_n(t) = 0$, for all $1 \le \ki \le n$, and this, in view of our
  choice of basis with $\beta_1 \ne 0$, gives $z_n(t) = 0$.

  Consequently, \eqref{eq:linearGalerkinODE} has a unique $T$-periodic
  solution, $(a_n^i, z_n) \in (W^{1,2}_T)^n \times W^{2,2}_T$, for
  any given $f_\ki$, $g_\ki$ and $\tilde{a}_n^i$ in
  $L^2_T$, see \eg~\cite[Theorem 1.2.1]{MR2761514}. 

  Let $S_n = \mathrm{span}\{\V \psi_1, \dots, \V \psi_n\}$, the
  existence of $T$-periodic solutions to the nonlinear system
  \eqref{eq:GalerkinODE}, will be proven by showing the existence of a fixed
  point for the mapping
  \begin{equation} \label{eq:Phi}
    \Phi: L^2(0,T;S_n) \times W^{1,2}_T \longrightarrow W^{1,2}(0,T;S_n) \times
    W^{2,2}_T \subset L^2(0,T;S_n) \times W^{1,2}_T
  \end{equation}
  which maps any $(\tilde{\V v}_n, \tilde{z}_n)$ in its domain to the
  unique solution $(\V v_n, z_n)$ of \eqref{eq:linearGalerkinODE}. This will be
  asserted using the Leray-Schauder fixed point principle (see \eg~\cite[Theorem 6.A]{MR816732}):

  Let
  \begin{equation}\label{eq:fixedPointSet}
    F = \{ (\V v_n, z_n) \in L^2(0,T;S_n) \times W^{1,2}_T: (\V v_n, z_n) =
    \alpha \Phi(\V v_n, z_n), \; 0<\alpha<1\}
  \end{equation}
  We claim that $F$ is a bounded set. This will require the following
  ``natural'' and ``particular'' energy estimates for elements of $F$. To fix
  the ideas we will use the norm of $\mathcal{D}^1$ ($W^{1,2}(\Omega)$) on
  $S_n$, and to ease the notation we drop the subscript $n$ in what follows.

  In a completely standard manner, we get the following energy equation for
  $(\V v, z) \in F$
  \begin{equation*}
    \frac12 \frac{d}{dt}(\rho\norm{\V v}^2 + \mm \abs{\frac{dz}{dt}}^2 + \ks
    \abs{z}^2) + 2\mu\norm{\V D(\V v)}^2 = -\rho((\V v -
    \frac{dz}{dt}\V e_1) \cdot \grad\V V, \V v) + \alpha\rho(\V f, \V v) +
    \alpha g \frac{dz}{dt}.
  \end{equation*}
  Using Corollary~\ref{cor:nonlinearBound} and Remark~\ref{rmk:DvGradv} we get
  \begin{equation*}
    \frac12 \frac{d}{dt}(\rho\norm{\V v}^2 + \mm \abs{\frac{dz}{dt}}^2 + \ks
    \abs{z}^2) + \mu\norm{\grad\V v}^2 \le \rho c_q \norm{\phi}_{W^{1,2}_T}
    \norm{\grad\V v}^2 + \alpha\rho(\V f, \V v) +
    \alpha g \frac{dz}{dt}.
  \end{equation*}
  So if
  \begin{equation} \label{eq:smallnessWeak}
    \stepcounter{equation} \tag{\theequation$\epsilon$}
    \norm{\phi}_{W^{1,2}_T} < \frac{\mu}{\rho c_q},
  \end{equation}
  using the boundary trace inequalities,
  \begin{equation*}
    \abs{\Gamma} \abs{\frac{dz}{dt}} = \norm{\frac{dz}{dt}\V
      e_1}_{L^2(\Gamma)} \le c'_b \norm{\V v}_{W^{1,2}(\Omega_r)} \le c_b
    \norm{\grad\V v},
  \end{equation*}
  we arrive at the following ``natural'' energy inequality
  \begin{equation} \label{eq:E}
    \frac{d}{dt}(\rho\norm{\V v}^2 + \mm \abs{\frac{dz}{dt}}^2 + \ks \abs{z}^2)
    + c_1 (\norm{\grad\V v}^2 + \abs{\frac{dz}{dt}}^2) \le
    c_2 (\norm{\V f}^2 + \abs{g}^2).
  \end{equation}
  Where $c_i$'s, in the above and in what follows, are constants depending at
  most on $\Omega$, $\mm$, $\ks$, $\rho$, $\mu$ and $\phi$. Specifically,
  integrating the above in $[0,T]$ over a period gives
  \begin{equation}\label{eq:partialBound}
    \int_0^T \norm{\grad\V v}^2 \,dt + \int_0^T
    \abs{\frac{dz}{dt}}^2 \,dt \le
    c_3 \int_0^T (\norm{\V f}^2_{L^2(\Omega)} + \abs{g}^2) \,dt
  \end{equation}

  Let
  \begin{equation}\label{eq:Efunc}
    E(t) = \mathcal{E}(\V v(t), \frac{dz(t)}{dt}, z(t)) \mathrel{\mathop:}=
    \frac12 (\rho\norm{\V v(t)}^2 + \mm \abs{\frac{dz(t)}{dt}}^2 + \ks
    \abs{z(t)}^2).
  \end{equation}
  Clearly, \eqref{eq:E} is only ``partially'' dissipative in $E$, as there is
  no contribution from $\abs{z}$ in the dissipation term. We next show that, by
  choosing a suitable equivalent energy functional, it is possible to obtain an
  energy relation with complete dissipation and conclude the boundedness of
  $F$.

  From \eqref{eq:linearGalerkinODE} with $\ki = 1$, we have that $(\V v, z) \in
  F$ satisfies
  \begin{multline} \label{eq:helper1}
    \rho \frac{d(\V v, \V\psi_1 )}{dt} + \mm\beta_1\frac{d^2z}{dt^2} +
    \ks\beta_1 z = - 2 \mu (\V D(\V v), \V D(\V\psi_1)) 
    \\+ \rho [((\V v-\frac{dz}{dt}\V e_1) \cdot \grad\V v,
    \V\psi_1) - ((\V v-\frac{dz}{dt}\V e_1) \cdot \grad\V V, \V\psi_1) - (\V V
    \cdot \grad\V v, \V\psi_1)] \\+ \alpha \beta_1 g + \alpha \rho (\V f,
    \V\psi_1).
  \end{multline}
  Multiplying the above by $z$, and using H\"older's inequality and Sobolev
  embedding theorem, we obtain
  \begin{align}
    \frac{d}{dt}[\rho z(\V v, \V\psi_1) + \mm\beta_1z\frac{dz}{dt}] &+
    \ks\beta_1 \abs{z}^2 \le \rho \abs{\frac{dz}{dt}}(\V v, \V\psi_1) +
    \mm\beta_1 \abs{\frac{dz}{dt}}^2 \nonumber \\
    &\hphantom{\le} + \rho (\norm{\grad \V v}
    +\norm{\grad\V V}+\abs{\frac{d z}{dt}}+2\frac\mu\rho) \abs{z}{\norm{\grad\V
        v}}\norm{\grad\V\psi_1} \nonumber \\
    &\hphantom{\le} + \rho (\norm{\grad\V v} +
    \abs{\frac{dz}{dt}}) \abs{z}\norm{\grad\V V} \norm{\grad\V\psi_1} \nonumber
    \\ \label{eq:helper2}
    &\hphantom{\le} + \beta_1 \abs{g}\abs{z} + \rho \abs{z} \norm{f}
    \norm{\V\psi_1}.
  \end{align}
  For $\delta \le
  \min\{1,\displaystyle\frac{1}{\norm{\V\psi_1}},\frac1\beta_1,
  \frac{\ks}{\rho\norm{\V\psi_1}+\mm\beta_1}\}$
  \begin{multline}\label{eq:Gfunc}
    E(t) \le G(t) = \mathcal{G}_\delta^{\psi_1}(\V v(t), \frac{dz(t)}{dt},
    z(t))\mathrel{\mathop:}= \rho\norm{\V v(t)}^2 + \mm
    \abs{\frac{dz(t)}{dt}}^2 + \ks \abs{z(t)}^2 \\+ \delta \rho z(t) (\V
    v(t),\V\psi_1) + \delta\mm\beta_1 z(t) \frac{dz(t)}{dt} \le 3 E(t),
  \end{multline}
  so $\mathcal{G}$ is an equivalent energy functional to
  $\mathcal{E}$. Multiplying \eqref{eq:helper2} by $\delta$ and adding to
  \eqref{eq:E}, with different estimates (compared to what is used above) for
  the terms $\alpha \rho(\V f, \V v)$ and $\alpha g(dz/dt)$, we obtain
  \begin{equation} \label{eq:Gunestimated}
    \begin{aligned}
    \frac{dG}{dt} + c_4 G &\le \rho
    \abs{\frac{dz}{dt}}\norm{\V v}\norm{\V\psi_1} + \mm\beta_1
    \abs{\frac{dz}{dt}} \abs{\frac{dz}{dt}} \\
    &\hphantom{\le} + \rho (\norm{\grad \V v}
    +\norm{\grad\V V}+\abs{\frac{dz}{dt}}+2\frac\mu\rho) \abs{z}{\norm{\grad\V
        v}}\norm{\grad\V\psi_1} \\
    &\hphantom{\le} + \rho (\norm{\grad\V v} +
    \abs{\frac{dz}{dt}}) \abs{z}\norm{\grad\V V} \norm{\grad\V\psi_1} \\
    &\hphantom{\le} + \beta_1 \abs{g}\abs{z} + \rho \abs{z} \norm{\V f}
    \norm{\V\psi_1} + \rho\norm{\V f} \norm{\V v} + \abs{g}\abs{\frac{dz}{dt}},
    \end{aligned}
  \end{equation}
  If $G(t_0) = 0$ for some $t_0 \in [0,T]$, we
  integrate \eqref{eq:E} in $[t_0,t]$, $t_0 \le t \le t_0+T$ with $E(t_0)=0$
  to deduce \eqref{eq:Fbound} below, otherwise, dividing the above equation by
  $\sqrt{G}$ and using Young's inequality, we get the following ``particular''
  energy inequality
  \begin{equation} \label{eq:G}
    \frac{d\sqrt{G}}{dt} + c_5 \sqrt{G} \le C_1
    (\norm{\grad\V v}^2 + \abs{\frac{dz}{dt}}^2) + c_6 (\norm{\grad\V V}^2 +
    \norm{\V f}^2 + \abs{g}^2) + C_2,
  \end{equation}
  where $C_i$'s, in the above and in what follows, depend also on $\V\psi_1$
  and $T$ (and $\V{\tilde{\V f}}$ and $\tilde{g}$ if non-zero) in addition to
  $\Omega$, $\mm$, $\ks$, $\rho$, $\mu$ and $\phi$. Integrating the above in
  $[0,T]$ and using \eqref{eq:partialBound}, \eqref{eq:carrierP} and
  \eqref{eq:carrierPext} gives
  \begin{equation} \label{eq:intG}
    \int_0^T \sqrt{G} \;dt \le C_3 \int_0^T (\norm{\grad\V
      V}_{L^2(\supp\V\psi_1)}^2 + \norm{\V f}_{L^2(\Omega)}^2 + \abs{g}^2)
    \;dt = C_4.
  \end{equation}

  Since $G$ is (absolutely) continuous, there is $t_0\in[0,T]$ such that
  $T\sqrt{G(t_0)} = \int_0^T \sqrt{G} \;dt \le C_4$, so integrating
  \eqref{eq:G} in $[t_0,t]$ for $t<T$, we obtain
  \begin{equation*}
    \sup_{t\in[0,T]} \sqrt{G(t)} \le C_4(1+\frac1T),
  \end{equation*}
  in particular,
  \begin{equation}\label{eq:Fbound}
    \sup_{t\in[0,T]} E(t) < C_5,
  \end{equation}
  and thus by \eqref{eq:partialBound}, the set $F$ is bounded in $L^2(0,T;S_n)
  \times W^{1,2}_T$:
  \begin{equation*}
    \int_0^T (\norm{\grad\V v}^2+\abs{z}^2+\abs{\frac{dz}{dt}}^2) \;dt \le
    C_6, \qquad \forall (\V v, z) \in F.
  \end{equation*}

  Let $(\V v, z) = \Phi(\V{\tilde{v}}, \tilde{z})$. In a totally similar
  manner as demonstrated above, we obtain the natural energy \eqref{eq:E} and
  the particular energy
  \begin{equation*}
    \frac{d\sqrt{G}}{dt} + c_5 \sqrt{G} \le C_7
    (\norm{\grad\V v}^2 +\abs{\frac{dz}{dt}}^2)+ C_8(\norm{\grad\tilde{\V
        v}}^2 + \abs{\frac{d\tilde z}{dt}}^2) + c_7 (\norm{\grad\V V}^2 +
    \norm{\V f}^2 + \abs{g}^2) + C_9,
  \end{equation*}
  so bounded sets
  \begin{equation*}
    \left\{(\V{\tilde{v}}, \tilde{z}) \in L^2(0,T;S_n) \times W^{1,2}_T:
    \int_0^T(\norm{\grad\V{\tilde v}}^2+\abs{\tilde z}^2+\abs{\frac{d\tilde
        z}{dt}}^2) \;dt \le c, \; c \in \mathbb{R}\right\},
  \end{equation*}
  are, indeed, mapped to uniformly bounded and equicontinuous sets.
  By the Ascoli-Arzel\`a theorem, $\Phi$ maps bounded sets into
  relatively compact sets. Moreover, for $(\V v_1, z_1) =
  \Phi(\V{\tilde{v}}_1,\tilde{z}_1)$ and $(\V v_2, z_2) =
  \Phi(\V{\tilde{v}}_2,\tilde{z}_2)$, with 
  \begin{gather*}
    E_{1-2}(t) = \mathcal{E}(\V v_1-\V v_2, \frac{dz_1}{dt}-\frac{dz_2}{dt},
    z_1-z_2), \\
    G_{1-2}(t) = \mathcal{G}^{\psi_1}_\delta(\V v_1-\V v_2,
    \frac{dz_1}{dt}-\frac{dz_2}{dt}, z_1-z_2),
  \end{gather*}
  we have
  \begin{equation*}
    \int_0^T ( \norm{\grad\V v_1 - \grad\V v_2}^2 +
    \abs{\frac{dz_1}{dt}-\frac{dz_2}{dt}}^2) \,dt \le C_{10}
    \int_0^T \norm{\grad\V{\tilde v}_1 - \grad\V{\tilde v}_2}^2 \,dt,
  \end{equation*}
  and
  \begin{multline*}
    \int_0^T \sqrt{G_{1-2}} \,dt \le C_{11}
    \int_0^T (\norm{\grad\V v_1 - \grad\V v_2}^2 + \abs{\frac{dz_1}{dt} -
      \frac{dz_2}{dt}}^2) \,dt \\+ C_{12} \int_0^T \norm{\grad\V{\tilde v}_1 -
      \grad\V{\tilde v}_2}^2 \,dt.
  \end{multline*}
  Hence, $\Phi$ is also continuous (and hence compact), and has a fixed
  point by Schaefer's fixed-point theorem. Considering the index $n$, that we
  had earlier suppressed to ease the notation, the fixed point, which (with an
  abuse of notation) we again denote by $(\V v_n, z_n)$, satisfies
  \eqref{eq:GalerkinODE}. It follows that for any $1\le i\le n$ (and $\eta \in
  C_T^\infty(\mathbb{R})$), $(\V v_n, z_n)$ satisfies:
  \begin{multline} \label{eq:boundedWeak}
    \int_0^T \left\{(\V v_n,\V\psi_i) \frac{d\eta}{dt} - \left((\V v_n -
    \frac{dz_n}{dt}\V e_1) \cdot \grad\V v_n, \V\psi_i \right)\eta -
    \frac{\mu}{\rho} (\V D(\V v_n), \V D(\V\psi_i)) \eta \right.\\\left. +
    \beta_i \left( \frac{\mm}{\rho} \frac{dz_n}{dt}\frac{d\eta}{dt} -
    \frac{\ks}{\rho} z_n\eta \right) \right\}dt \\= \int_0^T \left\{(\V V
    \cdot \grad\V v_n, \V\psi_i)\eta + \left((\V v_n - \frac{dz_n}{dt}\V
    e_1) \cdot \grad\V V, \V\psi_i\right)\eta - (\V f, \V\psi_i)\eta -
    \frac{\beta_i}{\rho} g\eta \right\}dt.
  \end{multline}

  Using \eqref{eq:partialBound} and \eqref{eq:Fbound}, we conclude the
  existence of $T$-periodic functions $(\V v, z)$ and a
  subsequence $\{(\V v_{n_k}, z_{n_k})\}_{k=1,2,\dots}$ such that
  {\renewcommand{\theenumi}{(\alph{enumi})}
   \renewcommand{\labelenumi}{\theenumi}
   \begin{enumerate}
    \item $\V v_{n_k}$ converges weekly to $\V v$ in
      $L^2(0,T;\mathcal{D})$.\label{itm:L2L2Bound}
    \item $\V v_{n_k}$ converges weekly to $\V v$ in
      $L^2(0,T;\mathcal{D}^1)$. \label{itm:L2H1Bound}
    \item $\V v_{n_k}$ converges weekly-\textasteriskcentered~ to $\V v$
      in $L^\infty(0,T;\mathcal{D})$. \label{itm:C0L2Bound} 
    \item $z_{n_k}$ converges weekly to $z$ in $W^{1,2}_T$. 
      \label{itm:dzdtBound}
  \end{enumerate}
  For a bounded subset $\Omega' \subset \Omega$, from item \ref{itm:L2L2Bound}
  and \eqref{eq:partialBound} together with \cite[Lemma II.5.2]{MR2808162},
  it follows that
  \begin{enumerate}
    \setcounter{enumi}{4}
    \item $\V v_{n_k}$ converges strongly to $\V v$ in
      $L^2(0,T;L^2(\Omega'))$. \label{itm:L2Strong}
  \end{enumerate}
  }
  \noindent The above convergences allow taking the limit along the
  subsequence $(\V v_{n_k}, z_{n_k})$ as $k\rightarrow\infty$ of
  \eqref{eq:boundedWeak} to obtain, for all $i \ge 1$ :
  \begin{multline*}
    \int_0^T \left\{(\V v,\V\psi_i) \frac{d\eta}{dt} - \left((\V v -
    \frac{dz}{dt}\V e_1) \cdot \grad\V v, \V\psi_i \right)\eta  -
    \frac{\mu}{\rho} (\V D(\V v), \V D(\V\psi_i)) \eta \right.\\\left. +
    \beta_i \left( \frac{\mm}{\rho} \frac{dz}{dt}\frac{d\eta}{dt} -
    \frac{\ks}{\rho} z\eta \right) \right\}dt = \int_0^T \left\{(\V V
    \cdot \grad\V v, \V\psi_i)\eta \vphantom{\frac{d}{d}}\right.\\\left. +
    \left((\V v - \frac{dz}{dt}\V e_1) \cdot \grad\V V, \V\psi_i\right)\eta -
    (\V f, \V\psi_i)\eta - \frac{\beta_i}{\rho} g\eta \right\}dt.
  \end{multline*}
  \eqref{eq:weak1} follows from the above by a simple density argument.

  To obtain \eqref{eq:weak2}, we note that for any $\theta \in
  C_0^\infty(\Omega\cup\Gamma)$ and an arbitrary $\eta \in L^2_T$, it follows
  from \eqref{eq:GalerkinODE}$_2$ that
  \begin{equation*}
    \int_0^T (\V v_n - \frac{dz_n}{dt}\V e_1, \grad\theta)\, \eta \,dt = 0.
  \end{equation*}
  \eqref{eq:weak2} follows from the above and the convergences
  \ref{itm:L2L2Bound} and \ref{itm:dzdtBound}, and this completes the proof.
\end{proof}
\begin{rmk} \label{rmk:smallnessEnergy}
  For the purpose of strong solutions later, we shall note that estimating
  the terms in \eqref{eq:Gunestimated} differently we get the following
  analogous inequality instead of \eqref{eq:G}:
  \begin{equation*}
    \frac{d\sqrt{G}}{dt} + c_5 \sqrt{G} \le C'_1
    (\norm{\grad\V v}^2 + \abs{\frac{dz}{dt}}^2 + \norm{\grad\V v} +
    \abs{\frac{dz}{dt}}) + c'_6 (\norm{\grad\V V}^2 +
    \norm{\V f} + \abs{g}),
  \end{equation*}
  and so \eqref{eq:intG} will read
  \begin{equation*}
    \int_0^T \sqrt{G} \;dt \le C'_4,
  \end{equation*}
  where $C'_4$ can be made as small as we wish by taking
  $\norm{\phi}_{W^{1,2}_T}$, $\norm{\V{\tilde{f}}}_{L^2(0,T;L^2(\Omega))}$
  and $\abs{\tilde{g}}_{L^2_T}$ sufficiently small.
\end{rmk}
\begin{rmk}  
  The week solutions $(\V v, z)$ obtained above satisfy an ``energy
  inequality.'' Indeed, taking the limit inferior (as
  $k\rightarrow\infty$) of \eqref{eq:partialBound} and using the convergences
  in items \ref{itm:L2H1Bound} and \ref{itm:dzdtBound} above, we obtain:
  \begin{equation} \label{eq:partialEnergy}
    \int_0^T \norm{\grad\V v}^2 \,dt + \int_0^T \abs{\frac{dz}{dt}}^2 \,dt \le
    c_3 \int_0^T \norm{\V f}^2 + \abs{g}^2) \,dt.
  \end{equation}
  Analogously, from \eqref{eq:Fbound} it follows that for every
  non-negative $\theta(t)$, we have
  \begin{equation*}
    \int_0^T (\norm{\V v_{n_k}}^2 + \abs{\frac{dz_{n_k}}{dt}}^2 +
    \abs{z_{n_k}}^2) \, \theta(t) \,dt \le \int_0^T C_{13} \theta(t) \,dt.
  \end{equation*}
  Again, taking the limit inferior of the above and noting that $\theta(t)$ is
  an arbitrary non-negative function, we get the estimate,
  \begin{equation} \label{eq:energy}
    \esssup_{t\in[0,T]} \left( \norm{\V v}^2 + \abs{\frac{dz}{dt}}^2 +
    \abs{z}^2 \right) \le C_{13}.
  \end{equation}

  So, in fact, the weak solution $(\V v, z)$ is such that $z \in
  W^{1,\infty}_T$.
\end{rmk}
\begin{rmk} \label{rmk:assymptotics}
  It follows from \cite[Proposition 1.]{MR2196495} that the weak solution for
  the fluid, obtained in Theorem \ref{thm:weak}, converges to $\V \chi$ as
  $\abs{\V x}\rightarrow \infty$, in a weak sense:
  \begin{equation*}
    \lim_{X\rightarrow\infty} \norm{\V v}_{L^2(0,T;L^3(\Omega^{\pm X}))} = 0.
  \end{equation*}
\end{rmk}

\section{Strong Solutions}

In this section we show that if the flow rate, $\phi(t)$, and the external
forces, $\V{\tilde{f}}$ and $\tilde g$, are ``small'' and more ``regular'', the
solution to \eqref{eq:noninertial} constructed in the previous section is more
regular. We note that for the existence of weak solutions of the previous
section there is no ``smallness'' condition needed on $\V{\tilde{f}}$ and
$\tilde g$. The smallness condition(s) will be given in the following Lemma and
for the regularity we assume \eqref{eq:forcesRegularityStrong}.
\begin{lem} \label{lem:timeRegularity}
  Let $T>0$. Assume that the $T$-periodic flow rate $\phi(t) \in W^{3,2}_T$,
  and the $T$-periodic forces $\V{\tilde{f}} \in W^{1,\infty}(0,T;L^2(\Omega))$
  and $\tilde{g} \in W^{1,\infty}_T$ satisfy the smallness conditions
  \eqref{eq:smallnessWeak} (given in the proof of Theorem \ref{thm:weak}) and
  \eqref{eq:smallnessStrong1} and \eqref{eq:smallnessStrong2} given
  below. Then, there is a weak solution $(\V u, z)$ to
  \eqref{eq:noninertial0phi} that satisfies \eqref{eq:weak1} with $\V V$ being
  the same flux carrier constructed before and $\V f$ and $g$ given by
  \eqref{eq:altforcePext}. Moreover, $(\V v, z)$ (with $\V v = \V u - \V V$)
  belongs to the following regularity class
  \begin{align*}
    \V v \in W^{1,\infty}(0,T;\mathcal{D}) \cap W^{1,2}(0,T;\mathcal{D}^1), &&
    z \in W^{2,\infty}_T.
  \end{align*}
\end{lem}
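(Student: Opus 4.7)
The plan is to revisit the Galerkin approximation $(\V v_n,z_n)$ constructed in Theorem~\ref{thm:weak} and derive time-differentiated estimates uniform in $n$. Since for each $n$ the coefficient vector $(a_n^i,z_n)$ solves the ODE system \eqref{eq:GalerkinODE} whose only time-dependence on the right comes from $\V f,g$, the enhanced regularity \eqref{eq:forcesRegularityStrong} combined with Remark~\ref{rmk:suppf} lets us bootstrap up to $a_n^i\in W^{2,\infty}_T$, $z_n\in W^{3,\infty}_T$, so the differentiation in $t$ below is legitimate. Setting $\V w_n=\partial_t\V v_n$ and $y_n=dz_n/dt$, differentiating \eqref{eq:GalerkinODE} produces a system for $\V w_n$ and $y_n$ of the same structural form: the inertial-elastic block on the left is preserved (with $\ks\beta_\kappa y_n$ in place of $\ks\beta_\kappa z_n$), while the right-hand side acquires $\partial_t f_\kappa$, $\partial_t g_\kappa$, commutator nonlinear terms such as $\bigl((\V w_n-\tfrac{dy_n}{dt}\V e_1)\cdot\grad\V v_n,\V\psi_\kappa\bigr)$, and terms with $\partial_t\V V$.

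Testing this differentiated system with $\V w_n$ and combining with the differentiated second equation times $\ks y_n/\rho$ yields an energy identity identical in structure to that leading to \eqref{eq:E}, but for
\begin{equation*}
  E'(t)=\tfrac12\bigl(\rho\norm{\V w_n}^2+\mm\abs{dy_n/dt}^2+\ks\abs{y_n}^2\bigr).
\end{equation*}
Corollary~\ref{cor:nonlinearBound} absorbs the term $((\V w_n-\tfrac{dy_n}{dt}\V e_1)\cdot\grad\V V,\V w_n)$ under the smallness \eqref{eq:smallnessWeak}. The genuinely new contributions of the form $((\V w_n-\tfrac{dy_n}{dt}\V e_1)\cdot\grad\V v_n,\V w_n)$ and $(\partial_t\V V\cdot\grad\V v_n,\V w_n)$ are estimated by H\"older, Sobolev embedding and the uniform $L^\infty(0,T;\mathcal D)\cap L^2(0,T;\mathcal D^1)$ bounds on $\V v_n$ already proven in Theorem~\ref{thm:weak}; by Remark~\ref{rmk:smallnessEnergy} these bounds are themselves small provided $\norm{\phi}_{W^{1,2}_T}$, $\norm{\V{\tilde f}}_{L^2(0,T;L^2(\Omega))}$ and $\norm{\tilde g}_{L^2_T}$ are small, which is the role of the first additional smallness hypothesis \eqref{eq:smallnessStrong1}, and a complementary smallness hypothesis \eqref{eq:smallnessStrong2} is used to absorb the resulting $\norm{\grad\V w_n}^2$ factor into the viscous dissipation.

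Next, apply the ``particular'' energy device of Theorem~\ref{thm:weak} to the differentiated system: use the $\kappa=1$ component to form $G'(t)=\mathcal G^{\psi_1}_\delta(\V w_n,dy_n/dt,y_n)$, multiply by $y_n$, add $\delta$ times this to the energy identity, and derive the analogue of \eqref{eq:G},
\begin{equation*}
  \frac{d\sqrt{G'}}{dt}+c\sqrt{G'}\le C(\norm{\grad\V w_n}^2+\abs{dy_n/dt}^2)+c'\bigl(\norm{\partial_t\V V}_{W^{1,2}(\Omega_0)}^2+\norm{\partial_t\V f}^2+\abs{\partial_t g}^2\bigr)+C'',
\end{equation*}
whose right-hand side is in $L^1(0,T)$ by \eqref{eq:forcesRegularityStrong}, \eqref{eq:carrierPext}$_{2,3}$ and Remark~\ref{rmk:suppf}. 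Integrating over a period, the integrated partial-dissipation estimate (the analogue of \eqref{eq:partialBound}) bounds $\int_0^T\sqrt{G'}\,dt$, and the mean-value argument producing \eqref{eq:Fbound} then gives $\sup_t E'(t)\le C$ uniformly in $n$. Passing to the limit along the subsequence already extracted in Theorem~\ref{thm:weak}, weak-$*$ compactness in $L^\infty(0,T;\mathcal D)$ and $L^\infty_T$ and weak compactness in $L^2(0,T;\mathcal D^1)$ identify the weak limits of $\partial_t\V v_{n_k}$ and $d^2z_{n_k}/dt^2$ with the distributional derivatives of the solution $(\V v,z)$ of Theorem~\ref{thm:weak}, yielding the claimed regularity.

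The main obstacle is that, after differentiation in time, the nonlinear and $\V V$-dependent terms are no longer controlled by $\norm{\phi}_{W^{1,2}_T}$ alone via Corollary~\ref{cor:nonlinearBound}; one must instead convert the already-established $L^\infty_t$ energy bound on $(\V v_n,z_n)$ into a smallness statement, which is only possible through the additional smallness of all driving data — this is precisely what \eqref{eq:smallnessStrong1}--\eqref{eq:smallnessStrong2} must quantify. Tracking exactly which norm of $\phi$, $\V{\tilde f}$, $\tilde g$ controls each problematic term (so that the smallness conditions are meaningful and compatible with \eqref{eq:smallnessWeak}) is the delicate bookkeeping at the heart of the argument.
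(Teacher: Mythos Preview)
Your overall structure (differentiate the Galerkin system, test with $\V w_n=\partial_t\V v_n$, pass to the limit) matches the paper, but the decisive step is handled incorrectly. The commutator term you single out, $((\V w_n-\tfrac{dy_n}{dt}\V e_1)\cdot\grad\V v_n,\V w_n)$, is bounded (after H\"older/Sobolev) by a quantity of the form $c\,\norm{\grad\V v_n(t)}\bigl(\norm{\grad\V w_n}^2+\abs{z''_n}^2\bigr)$, i.e.\ it carries the \emph{pointwise-in-$t$} factor $\norm{\grad\V v_n(t)}$ multiplying the very dissipation you need. The bounds you invoke from Theorem~\ref{thm:weak} are $\V v_n\in L^\infty(0,T;\mathcal D)$ (which controls only $\norm{\V v_n(t)}$, not $\norm{\grad\V v_n(t)}$) and $\V v_n\in L^2(0,T;\mathcal D^1)$ (which, even when small via Remark~\ref{rmk:smallnessEnergy}, gives no pointwise control). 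Consequently you cannot absorb this term into the viscous dissipation for all $t$, the analogue of \eqref{eq:partialBound} for the primed quantities does not follow, and the subsequent $G'$-argument has nothing to feed on.

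The paper's proof closes exactly this gap by a continuation argument rather than a repetition of the $\mathcal G$-trick. From the smallness of $\int_0^T\norm{\grad\V v_n}^2\,dt$ one first picks $t^*_n$ with $\norm{\grad\V v_n(t^*_n)}$ small (this is what \eqref{eq:smallnessStrong1} quantifies), so the coefficient $c_{10}-c_9\norm{\grad\V v_n}-c_8\norm{\grad\V v_n}^2$ in \eqref{eq:dvdtEnergy} is positive near $t^*_n$. One then assumes a first time $\bar t_n$ at which the coefficient vanishes, integrates \eqref{eq:dvdtEnergy} on $[t^*_n,\bar t_n]$ to bound $\norm{\V v_n'(\bar t_n)}^2+\abs{z''_n(\bar t_n)}^2$, and feeds this back into the \emph{undifferentiated} inequality \eqref{eq:E} evaluated at $\bar t_n$ to bound $\norm{\grad\V v_n(\bar t_n)}$ itself; \eqref{eq:smallnessStrong2} is precisely the condition making this bound strictly smaller than the root of $c_{10}-c_9 s-c_8 s^2$, contradicting the definition of $\bar t_n$. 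Thus $\norm{\grad\V v_n(t)}$ stays uniformly small for all $t$, the dissipation in \eqref{eq:dvdtEnergy} is genuinely positive, and the $n$-uniform bounds follow directly---no $G'$ functional is needed, since the would-be missing term $\abs{z_n'}^2$ is already controlled by \eqref{eq:Fbound}. Your outline misses this bootstrap between the differentiated and undifferentiated energy identities, which is the actual mechanism behind \eqref{eq:smallnessStrong1}--\eqref{eq:smallnessStrong2}.
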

\begin{proof}
  The proof follows closely the proof of higher regularity for the solutions to
  the Navier-Stokes initial boundary value problem, given in \cite[Theorem
    3.3.7]{MR1846644}. To put the (time-) periodic solutions at hand to an
  initial boundary value problem setting, we note that by
  \eqref{eq:partialBound}, we can choose the forces small enough such that at
  each Galerkin approximation level, $n$, for any $\epsilon > 0$, we have a
  $t_n^*$ such that
  \begin{equation} \label{eq:smallInitialGradv}
    \norm{\grad\V v_n(t_n^*)}^2 + \abs{\frac{dz}{dt}(t_n^*)}^2 < \epsilon,
  \end{equation}
  In fact, by \eqref{eq:partialBound}, \eqref{eq:fSmallnessBound} and
  \eqref{eq:gSmallnessBound}, this will be the case when
  \begin{equation*}
    2c_3\left((c_f^2+c_g^2)\norm{\phi}_{W^{1,2}_T}^2 +
      \norm{\V{\tilde{f}}}_{L^2(0,T;L^2(\Omega))}^2 +
      \abs{\tilde{g}}_{L^2_T}^2\right) < \epsilon T.
  \end{equation*}

  Differentiating \eqref{eq:GalerkinODE} with respect to $t$, then multiplying
  the resulting equation by $da^\ki_n/dt$ and summing over $\ki$, we get
  \begin{multline*}
    \frac12 \frac{d\norm{\V v'_n}^2}{dt} + \frac{\mm}{2\rho}
    \frac{d\abs{z''_n}^2}{dt} + \frac{2\mu}{\rho} (\V D(\V v'_n), \V D(\V
    v'_n)) = -\frac{\ks}{2\rho} z'_n z''_n + ((\V v'_n - z''_n\V e_1)
    \cdot \grad\V v_n, \V v'_n) -\\ (\V V' \cdot \grad\V v_n, \V v'_n) -
    ((\V v'_n - z''_n\V e_1) \cdot \grad\V V, \V v'_n) - ((\V v_n - z'_n\V e_1)
    \cdot \grad\V V', \V v'_n) +\\ \frac1\rho g'z''_n + (\V f', \V v'_n).
  \end{multline*}
  Where, to ease the notation, we have used $'$ to denote differentiation with
  respect to $t$. With the help of Lemma \ref{lem:nonlinearBound}, H\"older,
  Young and Poincar\'e inequalities 
  \begin{multline*}
    \frac12 \frac{d\norm{\V v'_n}^2}{dt} + \frac{\mm}{2\rho}
    \frac{d\abs{z''_n}^2}{dt} + \frac{2\mu}{\rho} (\V D(\V v'_n), \V D(\V
    v'_n)) \le \\\frac{\ks}{2\rho} \abs{z'_n}\,\abs{z''_n} + c_8 \norm{\grad\V
      v_n} (\norm{\grad v'_n}^2 + \abs{z''_n}^2) - (\V V' \cdot \grad\V v_n,
    \V v'_n) - \\c_q \norm{\phi}_{W^{1,2}_T} \norm{\grad\V v'_n}^2 - ((\V v_n -
    z'_n\V e_1) \cdot \grad\V V', \V v'_n) + \frac1\rho g'z''_n + (\V f', \V
    v'_n).
  \end{multline*}
  In a similar manner as the proof of Lemma \ref{lem:nonlinearBound} we have
  \begin{align*}
    (\V V' \cdot \grad\V v_n, \V v'_n) &\le c'_q \norm{\phi}_{W^{2,2}_T}
    \norm{\grad\V v_n} \,\norm{\grad\V v'_n}, \\
    ((\V v_n - z'_n\V e_1) \cdot \grad\V V', \V v'_n) & \le c'_q
    \norm{\phi}_{W^{2,2}_T} \norm{\grad\V v_n} \,\norm{\grad\V v'_n},
  \end{align*}
  where $c'_q = c'_q(\Omega, \mu, \rho)$. So by \eqref{eq:smallnessWeak} and
  Young's, H\"older and Poincar\'e inequalities (and boundary trace inequality
  for $\V v'_n$), we get
  \begin{multline} \label{eq:dvdtEnergy}
    \frac{d}{dt}(\norm{\V v'_n}^2 + \frac{\mm}{\rho}\abs{z''_n}^2) +
        (c_{10} - c_9 \norm{\grad\V v_n} - c_8 \norm{\grad\V v_n}^2)
    (\norm{\grad\V v'_n}^2 + \frac{\mm}{\rho}\abs{z''_n}^2) \le c_{11}
    \abs{z'_n}^2 + \\
    c_{12} (\norm{\phi}^2_{W^{2,2}_T} + \abs{g'}^2 + \norm{\V f'}^2).
  \end{multline}
  Choosing $\displaystyle\epsilon <
  \left(\frac{c_9-\sqrt{c_9^2+4c_{10}c_8}}{-2c_8}\right)^2$ in
    \eqref{eq:smallInitialGradv}, that is requiring that
  \begin{equation} \label{eq:smallnessStrong1}
    \stepcounter{equation} \tag{\theequation$\epsilon$}
    2c_3\left((c_f^2+c_g^2)\norm{\phi}_{W^{1,2}_T}^2 +
    \norm{\V{\tilde{f}}}_{L^2(0,T;L^2(\Omega))}^2 +
    \abs{\tilde{g}}_{L^2_T}^2\right) <
    \left(\frac{c_9-\sqrt{c_9^2+4c_{10}c_8}}{-2c_8}\right)^2 T,
  \end{equation}
  we deduce that in an interval containing $t^*_n$, the coefficient $c_{10} -
  c_9 \norm{\grad\V v_n(t)} - c_8 \norm{\grad\V v_n(t)}^2$ is positive. If this
  is the case in $[t^*_n, t^*_n+T]$ then we have \eqref{eq:positiveCoeff}
  below, and we continue the argument from there. Otherwise, there is a time
  $\bar{t}_n$, where
  \begin{equation} \label{eq:positiveCoeffInterval}
    \begin{aligned}
      c_{10} - c_9 \norm{\grad\V v_n(t)} - c_8 \norm{\grad\V v_n(t)}^2 &> 0,
      \qquad \text{ for } t^*_n \le t < \bar{t}_n < t^*_n+T, \\
      c_{10} - c_9 \norm{\grad\V v_n(\bar{t}_n)} - c_8 \norm{\grad\V
        v_n(\bar{t}_n)}^2 &= 0.
    \end{aligned}
  \end{equation}
  Integrating \eqref{eq:dvdtEnergy} in $(t^*_n,\bar{t}_n)$, we have, by
  \eqref{eq:Fbound} (and the Poincar\'e inequality)
  \begin{multline} \label{eq:primeBounds}
    \norm{\V v'_n(\bar{t}_n)}^2 + \frac{\mm}{\rho}\abs{z''_n(\bar{t}_n)}^2 \le
    \\
    e^{\displaystyle\int_{t^*_n}^{\bar{t}_n} c_{13}(c_8\norm{\grad\V v_n}^2 +
      c_9\norm{\grad\V v_n} - c_{10}) dt} (\norm{\V v'_n(t^*_n)}^2 +
    \frac{\mm}{\rho}\abs{z''_n(t^*_n)}^2)
    \\
    + \left(C_{14} + c_{12} (\norm{\phi}^2_{W^{2,2}_T} +
    \norm{g'}_{L^\infty_T} + \norm{\V
      f'}_{L^\infty(0,T;L^2(\Omega))})\right) (\bar{t}_n - t^*_n).
  \end{multline}
  Then we have the following two cases:
  
  \textit{Case I:} $\norm{\V v'_n(\bar{t}_n)}^2 +
  \frac{\mm}{\rho}\abs{z''_n(\bar{t}_n)}^2 \ge \norm{\V v'_n(t^*_n)}^2 +
  \frac{\mm}{\rho}\abs{z''_n(t^*_n)}^2$. Hence from \eqref{eq:primeBounds}, we
  get that
  \begin{multline} \label{eq:primeBoundsHelper}
    \norm{\V v'_n(\bar{t}_n)}^2 + \frac{\mm}{\rho}\abs{z''_n(\bar{t}_n)}^2 \le
    C_{15}\left(C_{14} + c_{12} (\norm{\phi}^2_{W^{2,2}_T} +
    \norm{g'}_{L^\infty_T} + \norm{\V
      f'}_{L^\infty(0,T;L^2(\Omega))})\right),
  \end{multline}
  where
  \begin{equation*}
    C_{15} = \left|\frac{\bar{t}_n -
        t^*_n}{1-e^{\displaystyle\int_{t^*_n}^{\bar{t}_n}
        c_{13}(c_8\norm{\grad\V v_n}^2 + c_9\norm{\grad\V v_n} - c_{10})
        dt}}\right|.
  \end{equation*}
  Note that $C_{15}$ is bounded in view of \eqref{eq:positiveCoeffInterval}
  and $C_{14}$ can be made as small as we wish by choosing
  $\norm{\phi}_{W^{1,2}_T}$, $\norm{\V{\tilde{f}}}_{L^2(0,T;L^2(\Omega))}$ and
  $\abs{\tilde{g}}_{L^2_T}$ sufficiently small, by Remark
  \ref{rmk:smallnessEnergy}.

  From \eqref{eq:E}, written at $t = \bar{t}_n$ and using Young's inequality,
  \eqref{eq:Fbound} and \eqref{eq:primeBoundsHelper} we get
  \begin{align*}
    \norm{\grad\V v_n(\bar{t}_n)}^2 + \abs{\frac{dz}{dt}(\bar{t}_n)}^2 &\le
    c_{14} (\norm{\V f}^2_{L^\infty(0,T;L^2(\Omega))} +
    \norm{g}^2_{L^\infty_T}) \\
    &\hphantom{\le} + c_{15}(\norm{\V v'_n(\bar{t}_n)}^2 +
    \frac{\mm}{\rho} \abs{z''_n(\bar{t}_n)}^2) + C_{16} \\
    & \le c_{14} (\norm{\V f}^2_{L^\infty(0,T;L^2(\Omega))} +
    \norm{g}^2_{L^\infty_T}) \\
    &\hphantom{\le} + c_{15} C_{15}\left(C_{14} + c_{12}
    (\norm{\phi}^2_{W^{2,2}_T} + \norm{g'}_{L^\infty_T} \right. \\
    &\hphantom{\le + c_{14} C_{15}(C_{14} + c_{12}
    (\norm{\phi}^2_{W^{2,2}_T}} \left. + \norm{\V
      f'}_{L^\infty(0,T;L^2(\Omega))})\right) + C_{16},
  \end{align*}
  where, again, in view of Remark \ref{rmk:smallnessEnergy}, we note that
  $C_{16}$ can be made small by choosing suitable norms of the forces small.
  So if, in addition to \eqref{eq:smallnessStrong1}, we also require that
  \begin{multline} \label{eq:smallnessStrong2}
    \stepcounter{equation} \tag{\theequation$\epsilon$}
    c_{14} (\norm{\V f}_{L^\infty(0,T;L^2(\Omega))} +
    \norm{g}^2_{L^\infty_T}) \\+ c_{15} C_{15}\left(C_{14} + c_{12}
    (\norm{\phi}^2_{W^{2,2}_T} + \norm{g'}_{L^\infty_T}
    + \norm{\V f'}_{L^\infty(0,T;L^2(\Omega))})\right) \\+ C_{16} <
    \left(\frac{c_9-\sqrt{c_9^2+4c_{10}c_8}}{-2c_8}\right)^2,
  \end{multline}
  then we have that $c_{10} - c_9 \norm{\grad\V v_n(\bar{t}_n)} - c_8
  \norm{\grad\V v_n(\bar{t}_n)}^2 > 0$, which is in contradiction with
  \eqref{eq:positiveCoeffInterval}$_2$ and, in fact, we have
  \begin{equation} \label{eq:positiveCoeff}
    c_{10} - c_9 \norm{\grad\V v_n(t)} - c_8 \norm{\grad\V
      v_n(t)}^2 > \delta', \qquad \forall t \in \mathbb{R},
  \end{equation}
  where $\delta'>0$ does not depend on $n$ and is determined by the
  ``smallness'' conditions \eqref{eq:smallnessWeak},
  \eqref{eq:smallnessStrong1} and \eqref{eq:smallnessStrong2}. Then from
  \eqref{eq:dvdtEnergy} we have
  \begin{equation*}
    \norm{\V v'_n(t)}^2 + \frac{\mm}{\rho}\abs{z''_n(t)}^2 + \delta'
    \int_0^T(\norm{\grad\V v'_n}^2 + \frac{\mm}{\rho}\abs{z''_n}^2) \le C_{17},
    \qquad 0\le t \le T,
  \end{equation*}
  and the claim of the Lemma follows in this case.

  \textit{Case II:} $\norm{\V v'_n(\bar{t}_n)}^2 +
  \frac{\mm}{\rho}\abs{z''_n(\bar{t}_n)}^2 < \norm{\V v'_n(t^*_n)}^2 +
  \frac{\mm}{\rho}\abs{z''_n(t^*_n)}^2$. Hence from \eqref{eq:primeBounds}, we
  get that
  \begin{multline*}
    \norm{\V v'_n(\bar{t}_n)}^2 + \frac{\mm}{\rho}\abs{z''_n(\bar{t}_n)}^2 <
    \norm{\V v'_n(t^*_n)}^2 + \frac{\mm}{\rho}\abs{z''_n(t^*_n)}^2 \le
    \\
    C_{15}\left(C_{14} + c_{12} (\norm{\phi}^2_{W^{2,2}_T} +
    \norm{g'}_{L^\infty_T} + \norm{\V
      f'}_{L^\infty(0,T;L^2(\Omega))})\right),
  \end{multline*}
  which is \eqref{eq:primeBoundsHelper} above and so the argument follows
  similar to the above \emph{Case I}.
\end{proof}
\begin{thm} \label{thm:strong}
  Under the assumptions of Lemma \ref{lem:timeRegularity}, the solution $(\V v,
  z)$ satisfies, furthermore:
  \begin{equation*}
    \V v \in L^\infty(0,T;W^{2,2}(\Omega)).
  \end{equation*}
\end{thm}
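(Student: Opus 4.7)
The plan is to freeze time in \eqref{eq:noninertial0phi}$_1$ and view the system as a stationary Stokes problem with source in $L^2(\Omega)$, then apply classical $W^{2,2}$-regularity of the Stokes operator on $C^2$ domains; the nonlinear convective term, which in three dimensions a priori sits only in $L^{3/2}$, will be absorbed on the left-hand side by a Gagliardo--Nirenberg interpolation. Lemma \ref{lem:timeRegularity} is what makes the argument feasible, since it yields $\partial_t\V v \in L^\infty(0,T;L^2(\Omega))$ and $z\in W^{2,\infty}_T$.

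Concretely, at a.e.\ $t\in[0,T]$ I would rewrite \eqref{eq:noninertial0phi}$_1$ as the stationary Stokes system
\begin{equation*}
  -\mu\,\laplace\V v + \rho\,\grad p = \V F(t),\qquad \div\V v = 0,\qquad \V v|_\Sigma=0,\qquad \V v|_\Gamma=\tfrac{dz}{dt}\V e_1,
\end{equation*}
with
\begin{equation*}
  \V F = -\rho\,\tfrac{\partial\V v}{\partial t}
  -\rho\,(\V v-\tfrac{dz}{dt}\V e_1)\cdot\grad\V v
  -\rho\,\V V\cdot\grad\V v
  -\rho\,(\V v-\tfrac{dz}{dt}\V e_1)\cdot\grad\V V
  +\rho\,\V f.
\end{equation*}
The non-homogeneous boundary datum on $\Gamma$ is lifted by writing $\V v=\V w+(dz/dt)\V h$ for a fixed, compactly supported, divergence-free $\V h\in W^{2,2}(\Omega)$ with $\V h|_\Gamma=\V e_1$ and $\V h|_\Sigma=0$; then $\V w$ vanishes on $\partial\Omega$ and solves a Stokes system whose source differs from $\V F$ only by terms proportional to $\abs{z'(t)}$ and $\abs{z''(t)}$, which already belong to $L^\infty(0,T;L^2(\Omega))$. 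By Lemma \ref{lem:timeRegularity}, \eqref{eq:carrierPext}, and the bounds on $\V f$ in Remark \ref{rmk:suppf}, every summand of the new source except the pure convective one is uniformly bounded in $L^\infty(0,T;L^2(\Omega))$.

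The key estimate is for the convective term. Combining H\"older, Sobolev's embedding $W^{1,2}(\Omega)\hookrightarrow L^6(\Omega)$, and the three-dimensional Gagliardo--Nirenberg interpolation $\norm{\grad\V v}_{L^3}\le c\norm{\grad\V v}^{1/2}\norm{\V v}_{W^{2,2}}^{1/2}$, I obtain
\begin{equation*}
  \norm{(\V v-\tfrac{dz}{dt}\V e_1)\cdot\grad\V v}_{L^2(\Omega)}
  \le c\,\norm{\V v}_{W^{1,2}}\,\norm{\grad\V v}^{1/2}\,\norm{\V v}_{W^{2,2}}^{1/2}
  + \abs{\tfrac{dz}{dt}}\,\norm{\grad\V v}.
\end{equation*}
Plugging this into the Stokes estimate $\norm{\V w}_{W^{2,2}(\Omega)}\le c\norm{\text{source}}_{L^2(\Omega)}$ and absorbing one factor of $\norm{\V v}_{W^{2,2}}^{1/2}$ by Young's inequality produces a pointwise-in-$t$ bound of the schematic form
\begin{equation*}
  \norm{\V v(t)}_{W^{2,2}(\Omega)}\le C\bigl(1+\norm{\tfrac{\partial\V v}{\partial t}(t)}+\abs{z''(t)}+\norm{\V v(t)}_{W^{1,2}}^3+\abs{z'(t)}\,\norm{\grad\V v(t)}\bigr),
\end{equation*}
whose right-hand side lies in $L^\infty(0,T)$ by Lemma \ref{lem:timeRegularity} and \eqref{eq:energy}, yielding the theorem.

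The principal obstacle is the appeal to Stokes $W^{2,2}$-regularity on the \emph{unbounded} channel with the body-attached boundary component $\Gamma$. I would overcome this by a cut-off decomposition of $\Omega$ into the bounded piece $\Omega_0$, whose full boundary is of class $C^2$ and on which Cattabriga-type regularity applies directly, and the semi-infinite cylindrical tails $\Omega^{\pm X_0}$, on which $\V f\equiv 0$ by \eqref{eq:fSupp}, $\V V=\V\chi$ retains the Poiseuille regularity in \eqref{eq:carrierPext}, and the asymptotic decay of $\V v\in\mathcal{D}^1$ permits closure of Stokes estimates on straight cylinders; patching the two regimes yields a $t$-uniform global $W^{2,2}$-bound on $\V v$.
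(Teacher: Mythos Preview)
Your proposal is correct and follows the same overarching strategy as the paper: freeze $t$, treat \eqref{eq:noninertial0phi}$_1$ as a stationary Stokes/elliptic problem whose source lies in $L^2(\Omega)$ thanks to the time-regularity furnished by Lemma \ref{lem:timeRegularity}, and invoke $W^{2,2}$-regularity. The one substantive difference is in how the convective term $\V v\cdot\grad\V v$ is handled. You control it via the Gagliardo--Nirenberg interpolation $\norm{\grad\V v}_{L^3}\le c\,\norm{\grad\V v}^{1/2}\norm{\V v}_{W^{2,2}}^{1/2}$ and absorb the half-power of $\norm{\V v}_{W^{2,2}}$ by Young's inequality, which works because $\V v\in L^\infty(0,T;\mathcal{D}^1)$ (from $W^{1,2}(0,T;\mathcal{D}^1)\hookrightarrow C([0,T];\mathcal{D}^1)$). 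The paper instead isolates $\V v\cdot\grad\V v$ on the right-hand side of the elliptic problem and shows it belongs to $L^\infty(0,T;L^2(\Omega))$ by a bootstrap argument in the style of \cite[Theorem 3.3.8]{MR1846644}. Your absorption trick is more direct under the available bounds; the bootstrap is slightly heavier but does not rely on being able to absorb a constant. Your explicit lifting of the boundary datum and the cut-off decomposition addressing Stokes regularity on the unbounded channel are points the paper passes over by appealing to ``standard elliptic regularity results,'' so in that respect your write-up is more detailed than the original.
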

\begin{proof}
  With the regularity obtained in Lemma \ref{lem:timeRegularity},
  \eqref{eq:weak1} can be written as
  \begin{align} \label{eq:bilinear}
    \frac{\mu}{\rho} (\V D(\V v), \V D(\V\psi)) = (\V h - \V v \cdot \grad\V v,
    \V\psi), && \V\psi \in \mathcal{D}_0^\infty \quad(\text{or by density, }
    \V\psi \in \mathcal{D}^1),
  \end{align}
  where
  \begin{gather*}
    \V h = \V f - \frac{\partial\V v}{\partial t} - \V V \cdot \grad\V v - \V v
    \cdot \grad\V V + \frac{dz}{dt} \V e_1 \cdot \grad(\V v + \V V) + \grad w,
    \\
    w(\V x, t) = \frac 1\rho (\mm\frac{d^2z}{dt^2}-\ks z - g) \frac{\theta(\V
      x)}{\int_\Gamma n_1\theta\,dS}, 
  \end{gather*}
  for some $\theta \in C_0^\infty(\Omega \cup \Gamma)$ such that $\int_\Gamma
  n_1\theta\,dS \ne 0$. It follows from Lemma \ref{lem:timeRegularity},
  H\"older inequality and various Sobolev embedding theorems that $\V h \in
  L^\infty(0,T; L^2(\Omega))$.

  Also by Remark \ref{rmk:DvGradv}, the bilinear form $(\V D(\V v), \V
  D(\V\psi))$, on the left hand side of \eqref{eq:bilinear}, is elliptic so if
  one can show that $\V v \cdot \grad\V v \in L^\infty(0,T; L^2(\Omega))$, the
  claim follows from standard elliptic regularity results; And this can be
  shown by a bootstrap argument similar to the one in \cite[Theorem
    3.3.8]{MR1846644}.
\end{proof}

\section{Acknowledgments}
Giusy Mazzone gratefully acknowledges the support of the Natural Sciences and Engineering Research Council of Canada (NSERC) through the NSERC Discovery Grant ``Partially dissipative systems with applications to fluid-solid interaction problems''.

\bibliographystyle{plain}
\bibliography{infinitePipeOscillator}

\end{document}